\newtheorem{lemma}{Lemma}
\newtheorem{theorem}[lemma]{Theorem}
\newtheorem{prop}[lemma]{Proposition}
\newtheorem{coro}[lemma]{Corollary}
\numberwithin{lemma}{section}
\numberwithin{fact}{section}
\numberwithin{equation}{section}
\title{Exact solution of two classes of prudent polygons}
\author{Uwe~Schwerdtfeger\\
\\
Fakult\"at f\"ur Mathematik\\
Universit\"at Bielefeld\\
Postfach 10 01 31, 33501 Bielefeld, Germany\\
}
\begin{document}

\maketitle

%
%
%
\begin{abstract}Prudent walks are self-avoiding walks on a lattice which never step into the direction of an already occupied vertex. We study the closed version of these walks, called prudent polygons, where the last vertex of the walk is adjacent to its first one. More precisely, we give the half-perimeter generating functions of two subclasses of prudent polygons on the square lattice, which turn out to be algebraic and non-D-finite, respectively. 
\end{abstract}
\section{Introduction}
The enumeration of self-avoiding walks (SAW) and polygons (SAP) on a lattice by their number of steps \cite{MaSl} is a long standing problem in combinatorics. Extrapolation of series data from exact enumeration has led to high precision estimates of the exponential growth rate and subexponential corrections but an exact solution of either problem (i.e. finding the generating function, see below) seems out of reach. 
Rechnitzer \cite{AR} has shown that the anisotropic generating function of SAPs on the square lattice is not D-finite. A (possibly multivariate) function $f(\textbf{z})$ is \emph{D-finite}, if the vector space over $\mathbb{C}(\textbf{z})$ spanned by its derivatives is finite dimensional. In the univariate case this means that $f$ is a solution of a homogeneous linear ordinary differential equation with polynomial coefficients. At present one tries to find solvable subclasses with large exponential growth rates. This approach is particularly successful in two dimensions. We will restrict to the square lattice in our paper. 

One promising class are so-called \emph{prudent walks} (PW) \cite{Duc,Pre}, which are SAWs never stepping towards an already occupied vertex. Note that a general prudent walk is not \emph{reversible}, i.e. the walk traversed backwards from its terminal vertex to its initial vertex may not be prudent. Since SAWs are counted modulo translation,  we may choose the initial vertex of a PW to be the origin $(0,0).$ The full problem of PW is unsolved, but recently Bousquet-M\'elou \cite{Bou2} succeeded in enumerating a substantial subclass. We adopt the terminology of her paper and use the same methods to obtain the generating functions for the corresponding polygon models defined below. Every nearest neighbour walk on the square lattice has a minimal bounding rectangle containing it, referred to as the \emph{box} of the walk. It is easy to see that each unit step of a prudent walk ends on the boundary of its current box. (This is \emph{not} a characterisation of PWs, e.g. the walk $(0,0)\to (0,1)\to (1,1)\to (2,1)\to (2,0)\to (1,0)$ is not prudent.) This property allows the definition of the following subclasses. Call a PW \emph{one-sided}, if every step ends on the \emph{top} side and \emph{two-sided}, if every step ends on \emph{top} or on the \emph{right} side of the current box. Similarly, a PW is referred to as \emph{three-sided} if every step ends on the \emph{left}, \emph{top} or the \emph{right} side of the current box and additionally each \emph{left step} and each \emph{right step} that ends on the \emph{bottom side} of the  current box \emph{inflates} the box.\\ 
\textbf{Remark.} \textit{i)} As soon as the width of the box of a PW is greater than one, the latter additional condition is redundant. It rules out certain configurations which can occur only if the box width is equal to one, namely ``downward zig-zags" of width one, e.g. $\ldots\to (1,0)\to (1,-1)\to (0,-1)\to (0,-2) \to (1,-2)\to (1,-3) \to \ldots$ which needlessly complicate the computations below.\\
\textit{ii)} Duchi \cite{Duc} introduced two-sided and three-sided PWs as \emph{type-1} and \emph{type-2} PWs, respectively. In \cite{DGJ,GGJD09} the authors also employ her notation.

\smallskip
Explicit expressions for the generating functions of one-, two- and three-sided PWs have been found so far confirming the data obtained in \cite{DGJ,GGJD09} by computer enumeration.  
The first class consists of partially directed walks and has a rational generating function. The second class was shown to have an algebraic generating function by Duchi \cite{Duc} and recently in \cite{Bou2} the third class was solved and the generating function was found not to be D-finite. 

\smallskip
Guttmann \cite{DGJ,GGJD09,AJG} proposed to study the polygon version of the problem, meaning walks, whose last vertex is adjacent to the starting vertex. We exclude single edges from this definition. As above, the property of being prudent demands a starting vertex and a terminal vertex. So prudent polygons are \emph{rooted} polygons with a directed root edge. Note further that a prudent polygon (PP) which ends, say, to the right of the origin (i.e. in the vertex $(1,0)$) may never step right of the line $x=1,$ and furthermore if the walk hits that line it has to head directly to the vertex $(1,0).$ So prudent polygons are \emph{directed} in the sense that they contain a corner of their box. Moreover, a $k$-sided PP can be interpreted as a $(k-1)$-sided PW confined in a half-plane, see also Section \ref{Conclusion}. In this paper we deal with the polygon versions of the two-sided and three-sided walks, referred to as two-sided and three-sided PPs. Enumeration of one-sided PPs is trivial, since these are simply rows of unit cells. We give explicit expressions for the half-perimeter generating functions of two-sided PPs and three-sided PPs and show that the latter is not D-finite, as expected on numerical grounds \cite{DGJ,GGJD09}. To our knowledge three-sided PPs are the first \emph{exactly solved} polygon model with a non-D-finite \emph{half-perimeter} generating function. Enumeration of the full class of PPs remains an open problem, as for the walk case. 

\smallskip
\noindent
\textbf{Outline:} In Section \ref{Functionalequations} we give functional equations for the generating functions which are based on decompositions of the classes in question, in Section \ref{Solution} we solve those by the kernel method \cite{Bou1,Bou2,BouJeh,MisRec} and in Section \ref{Analyticproperties} we study the analytic behaviour of the generating functions of two-sided and three-sided prudent polygons. Section \ref{Random} is dedicated to the random generation of PPs.
\section{Functional equations}\label{Functionalequations}
In combinatorial enumeration of objects from a class $\cal P$ (say PPs) with respect to counting parameters $c_1,\ldots,c_n:\; {\cal P}\to \mathbb{Z}_{\geq 0}$ (say perimeter, area etc.) the (multivariate) power series
\[
P\left(x_1,\ldots,x_n\right)=\sum_{\alpha\in {\cal P}} x_1^{c_1(\alpha)}\cdots x_n^{c_n(\alpha)}
\]
is called the \emph{generating function} of $\cal P.$
In the following we will count prudent polygons by half-perimeter and other, so-called \emph{catalytic counting parameters}. The variables in the generating function marking the latter are called \emph{catalytic variables}. Their introduction allows us to translate certain combinatorial decompositions into non-trivial functional equations for the associated generating functions \cite{GouJac}. Furthermore, we identify a PP (a ``closed" PW) with the collection of unit cells it encloses and build larger PPs from smaller ones by attaching unit cells in a prudent fashion, i.e. the new boundary walk with the same initial vertex remains prudent.

\smallskip
A two-sided prudent polygon either ends at the vertex above the origin or at the vertex to the right of it. This partitions two-sided PPs into two subsets, which can be transferred into each other by the reflection in the diagonal $x=y.$ So it suffices to enumerate prudent polygons ending on the top of their box. Here two cases occur, namely the ``degenerate" case when the first steps of the walk are left. The resulting PP is simply a row of unit cells pointing to the left. These have a half-perimeter generating function $t^2+t^3+\ldots=t^2/(1-t).$ In the ``generic case" such a PP is a bar graph turned upside down, i.e. a column convex polyomino containing the top side of its bounding box, cf. Figure \ref{sdtop}.  
\begin{figure}[htb]
\begin{center}
\includegraphics[height=14 mm,width=56 mm]{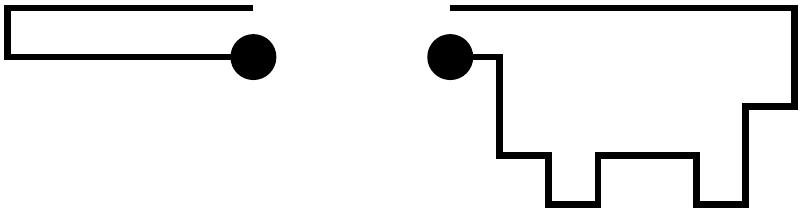}
\caption{\small Degenerate (left) and generic 2-sided PPs ending on the top of the box}\label{sdtop}
\end{center}
\end{figure}
Denote by $B(t,u,w)$ the generating function of bar graphs counted by half-perimeter, width and height of the rightmost column (catalytic parameter), marked by $t,$ $u$ and $w$ respectively. Here $w$ is the catalytic variable. The width parameter is not a catalytic parameter. However, it will be important in the study of three-sided PPs. We follow the lines of \cite{Bou1}. 
\begin{lemma}
The generating function $B(t,u,w)$ of bar graphs satisfies the functional equation
\begin{equation} \label{feqB}
B \left(t,u, w \right)=u\left(\frac {{t}^{2}w}{1-wt}+\frac{wt \left( B \left(t,u, 1 \right) -B \left(t,u, w \right)  \right) }{1-w}+\frac{B \left(t,u, w \right) {t}^{2}w}{1-wt}\right).
\end{equation}
\end{lemma}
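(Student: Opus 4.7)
My plan is to derive \eqref{feqB} by the classical column-by-column decomposition of bar graphs used in \cite{Bou1}. Identifying a bar graph with its sequence of column heights $h_1,\ldots,h_n \geq 1$, its half-perimeter equals $n + \tfrac12\bigl(h_1 + h_n + \sum_{i=1}^{n-1}|h_i - h_{i+1}|\bigr)$. I would split the class into two disjoint families: bar graphs consisting of a single column, and bar graphs with at least two columns, the latter being obtained from a smaller bar graph by appending a new rightmost column. The catalytic variable $w$, which tracks the height of the current rightmost column, is indispensable here because the perimeter cost of the append operation depends on the height of the column being attached to.

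The first step is the single-column contribution: a column of height $h \geq 1$ contributes $t^{h+1} u w^h$, and summing over $h \geq 1$ yields exactly the term $u\,t^2 w/(1-tw)$.

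The main step is the append operation. Starting from a bar graph whose rightmost column has height $k$, I append a new rightmost column of height $h \geq 1$. A short direct calculation using the half-perimeter formula shows that the half-perimeter increases by $1$ if $h \leq k$ and by $1 + (h-k)$ if $h > k$, while the width grows by one and the rightmost-column height becomes $h$. Expressing this as a substitution rule on a monomial $w^k$ and summing over $h$, the contribution splits into two geometric sums, $u t\sum_{h=1}^{k} w^h$ and $u\sum_{h>k} t^{\,1+h-k} w^h$, each of which evaluates to a closed-form rational expression in $w^k$ and $w$. Assembling these over all bar graphs, the terms proportional to $w^k$ sum to $B(t,u,w)$ while the terms independent of $k$ sum to $B(t,u,1)$, producing the second and third terms of \eqref{feqB}.

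The only delicate point is the case split $h \leq k$ vs.\ $h > k$: the regime $h \leq k$ produces $utw(1-w^k)/(1-w)$, and summing this over all bar graphs yields the characteristic non-local quantity $utw\bigl(B(t,u,1) - B(t,u,w)\bigr)/(1-w)$. This feature is what prevents a direct solution and is exactly what will be handled by the kernel method in Section~\ref{Solution}; everything else reduces to routine geometric-series manipulations and a careful bookkeeping of the three statistics.
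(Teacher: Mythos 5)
Your proposal is correct and follows essentially the same column-by-column decomposition as the paper: single columns give the first term, appending a shorter-or-equal column gives the second, and appending a strictly longer column gives the third. The only cosmetic difference is that you compute the half-perimeter increments directly from the formula $n+\tfrac12(h_1+h_n+\sum|h_i-h_{i+1}|)$, whereas the paper describes the "longer column" case as duplicating the rightmost column and attaching a nonempty column below, yielding the same product $B(t,u,w)\cdot ut^2w/(1-wt)$.
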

\begin{proof}
A bar graph is either a single column, or it is obtained by attaching a new column to the right side of a bar graph. The decomposition is sketched in Figure \ref{feq2sd}. 
\begin{figure}[htb]
\begin{center}
\includegraphics[height=16 mm,width=78mm]{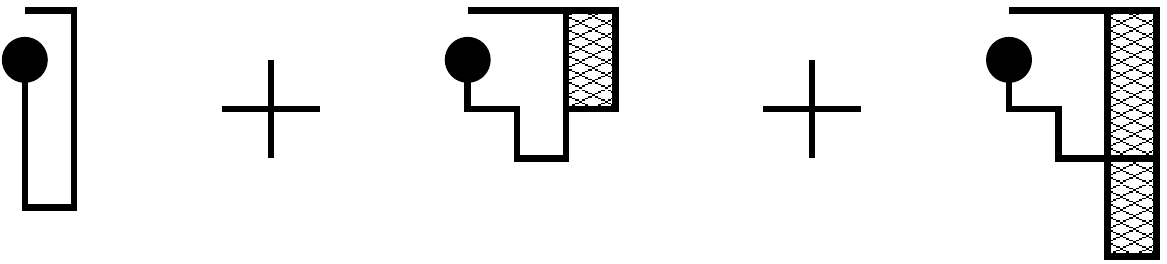}
\caption{\small Illustration of the decomposition underlying functional equation \eqref{feqB}}\label{feq2sd}
\end{center}
\end{figure}

\noindent
Single columns of height $\geq 1$ contribute $u{t}^{2}w/(1-wt)$ to the generating function. The polygons obtained by adding a column which is shorter than or equal to the old rightmost column contribute the second summand. This is seen as follows. A polygon of half-perimeter $n,$ width $k$ and rightmost column height $l$ contributing $t^nu^kw^l$ to $B(t,u,w)$ gives rise to $l$ polygons whose rightmost column is shorter or equal. Their contribution sums up to
\begin{equation}\label{Ableitung}
tu\sum_{j=1}^{l}t^nu^kw^j=tuw\,\frac{t^nu^k1^l-t^nu^kw^l}{1-w}.
\end{equation}
Summing this over all polygons gives the second summand. The third summand corresponds to adding a larger column. To this end duplicate the rightmost column and attach a non-empty column below the so obtained new rightmost column. A so obtained bar graph can be viewed as an ordered pair of a bar graph and a column. The generating function of those pairs is the third summand of the rhs. This finishes the proof.
\end{proof}
\noindent
The walk constituting the boundary of a three-sided PP has $(0,0)$ as its initial vertex and $(1,0)$ or $(-1,0)$ or $(0,1)$ as its terminal vertex. Those walks with terminal vertex $(0,1)$ may not step above the line $y=1$ and they have to move directly to the vertex $(0,1)$ as soon as they step upon that line. This leads to two sorts of bar graphs either rooted on their left or on their right side, see Figure \ref{3sdTop}.
\begin{figure}[htb]
\begin{center}
\includegraphics[height=10 mm,width=60mm]{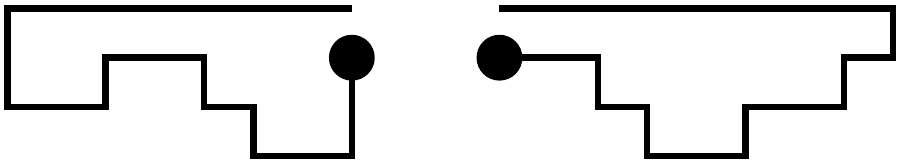}
\caption{\small Three-sided PPs with terminal vertex $(0,1)$ are bar graphs}\label{3sdTop}
\end{center}
\end{figure}

\noindent
So only those three-sided PPs are of further interest, which end in $(1,0)$ or $(-1,0).$ Both classes are transformed into each other by a reflection in the line $x=0.$ We study those ending to the right of the origin in the vertex $(1,0).$ Again a degenerate and a generic case are distinguished, according to whether such a PP reaches its terminal vertex from below via the vertex $(1,-1)$ (``counterclockwise around the origin") or from above, via the vertex $(1,1)$ (``clockwise"). In the degenerate case we simply obtain a single column. In the generic case, a (possibly empty) sequence of initial down steps is followed by a left step.  So denote by $R(t,u,w)$ the generating function of the generic three-sided PPs ending in the vertex $(1,0)$ counted by half-perimeter, the length of the top row and the distance of the top left corner of the top row and the top left corner of the box, marked by $t,$ $u,$ and $w,$ respectively, cf. Figure \ref{3sdcatvar}.  Here both $u$ and $w$ are catalytic variables. 
\begin{figure}[htb]
\begin{center}
\includegraphics[height=20 mm,width=80mm]{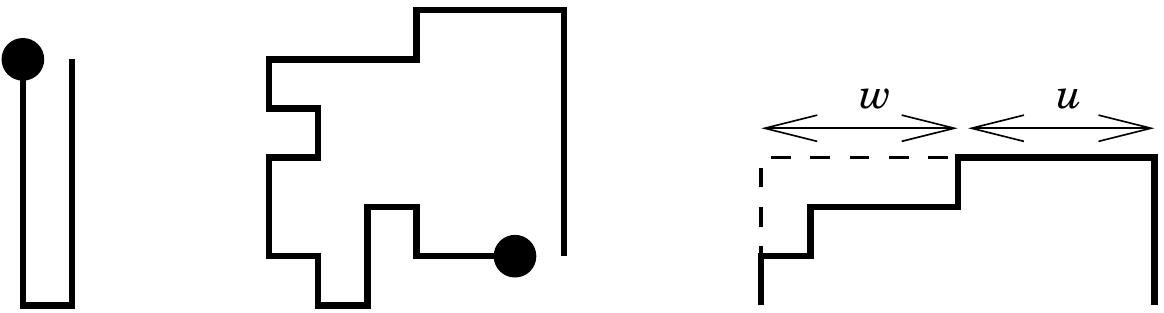}
\caption{\small Degenerate and generic three-sided PPs, catalytic variables}\label{3sdcatvar}
\end{center}
\end{figure}
\begin{lemma}
The generating function $R(t,u,w)$ of generic three-sided PPs satisfies the functional equation 
\begin{equation}\label{feqR}
\begin{split}
R(t,u,w)=\;&ut \left( B \left( t,u \right)+t\right) +\frac {ut \left( R \left( t,w,w \right) -R \left( t,u,w \right)  \right) }{w-u} \\
&+ {\frac {u{t}^{2} \left( R \left( t,u,w \right) -R \left( t,u,ut \right)  \right) }{w-ut}} +R \left( t,u,ut \right) ut \left( B \left( t,u \right) +t \right),
\end{split}
\end{equation}
where $B(t,u):=B(t,u,1)$ is the generating function of bar graphs counted by half-perimeter and width.
\end{lemma}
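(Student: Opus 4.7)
The plan is to parallel the proof of the bar graph equation \eqref{feqB}, decomposing a generic three-sided PP ending at $(1,0)$ by adding a new top row to a smaller PP, while tracking the two catalytic parameters: the top-row length (marked by $u$) and the leftward offset of the bounding box relative to the top row (marked by $w$). The four summands on the right-hand side of \eqref{feqR} should correspond to four disjoint cases.

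First, I would isolate the base case. A generic three-sided PP whose structure below the top row is ``trivial''---namely the top row sits directly on a bar graph or on a single column, with the box flush with the left end of the top row---contributes $ut\bigl(B(t,u)+t\bigr)$. Here the factor $ut$ accounts for the single cell joining the top row to the rightmost descending column that terminates at $(1,0)$, and $B(t,u)+t$ accounts for the supporting bar graph or lone column.

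Next, I would perform the main recursive step: given a smaller generic three-sided PP with top-row length marked by $u$ and offset marked by $w$, stack a new top row on top. Depending on whether the new row extends the box to the left or not, the summation over admissible positions produces two divided-difference terms, exactly as in the computation leading to \eqref{Ableitung}. The case where the new row does \emph{not} inflate the box leftward should yield the term $ut\bigl(R(t,w,w)-R(t,u,w)\bigr)/(w-u)$ (summation over positions on the previous top row, with $u$ replacing $w$), while the case where the new row \emph{does} inflate the box leftward should yield $ut^{2}\bigl(R(t,u,w)-R(t,u,ut)\bigr)/(w-ut)$ (summation over offsets, with the specialization $w=ut$ corresponding to the extremal configuration in which the new offset equals the extension of the new row). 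Finally, the product term $R(t,u,ut)\cdot ut\bigl(B(t,u)+t\bigr)$ should account for polygons whose top row is extended rightward by a bar-graph-like piece attached beyond the rightmost column of the underlying three-sided PP, where the specialization $w=ut$ again enforces geometric compatibility at the interface.

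The main obstacle will be pinning down the precise combinatorial meaning of the specialization $w=ut$ and ensuring that the four cases really partition the class without overlap. The bookkeeping for the leftward extension of the box (tracked by $w$) interacting with the rightward extension by an attached bar graph (tracked by $u$) is the subtle point, and the correct identification of a canonical ``last step'' of construction for each polygon is what will make the decomposition bijective. Once this is established, collecting the four contributions and using $B(t,u)=B(t,u,1)$ gives \eqref{feqR} directly.
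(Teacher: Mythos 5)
The overall strategy—decompose a generic three-sided PP by removing its top row and track the two catalytic parameters—is the same as the paper's, but the identification of the four summands with combinatorial cases is wrong in a way that would make the derivation fail. The paper's decomposition distinguishes \emph{three} ways to add a new top row to a PP with top right corner at $(1,y-1)$: (i) a new row that is \emph{shorter than or equal to} the old top row; (ii) a new row \emph{longer} than the old top row but \emph{not inflating} the box to the left; (iii) a new row that \emph{does inflate} the box to the left (length = old box width $+1$), together with an optional bar graph attached further to the \emph{left}. These give, respectively, the second, third and fourth summands of \eqref{feqR}, while the first summand is the base case $y=1$ (a unit square or a bar graph with the unit square glued to its right).

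You instead assign the second summand to ``new row does not inflate the box'' — but that bucket actually covers two distinct divided-difference contributions (cases (i) and (ii) above), which is why the equation has \emph{two} divided-difference terms, one in $u$ and one in $w$. You then assign the third summand to ``new row does inflate the box,'' but that case is the fourth summand, not the third; the third summand $ut^{2}\bigl(R(t,u,w)-R(t,u,ut)\bigr)/(w-ut)$ is the divided difference in the offset variable over rows of length strictly between the old top-row length and the old box width. Finally, your interpretation of $R(t,u,ut)\cdot ut\bigl(B(t,u)+t\bigr)$ as a ``rightward'' extension cannot be right: the terminal vertex $(1,0)$ pins the right side of the box at $x=1$, so nothing is ever attached to the right of the rightmost column. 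The bar graph in the fourth summand extends to the \emph{left} of the freshly inflated box, and the substitution $w=ut$ in $R$ encodes extending the new top row flush to the old left box wall before the one-cell inflation $ut$ and the optional further leftward bar graph $B(t,u)+t$. To repair the argument you would need to recognise the three-way (not two-way) split of the ``add a row'' step and correct the direction of the bar-graph attachment.
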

\begin{proof}
The decomposition we use is sketched in Figure \ref{3sdfeq}. 
\begin{figure}[htb]
\begin{center}
\includegraphics[height=15 mm,width=140mm]{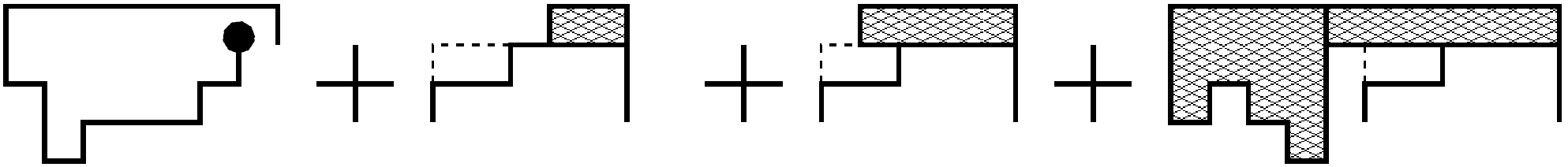}
\caption{\small Illustration of the decomposition underlying functional equation \eqref{feqR}}\label{3sdfeq}
\end{center}
\end{figure}

\noindent
The polygons in question contain the top right corner of their box. This corner is some point $(1,y).$ If $y=1,$ then the PP is either the unit square containing $(0,0)$ and $(1,1)$ or a bar graph as above with that unit square glued to the right. This yields the first summand. A PP with $y>1$ is obtained in one of the following three ways from a PP with top right corner $(1,y-1).$ The first is to add a new row on top, which is shorter than or equal to the original top row. A similar computation as in \eqref{Ableitung} (with some additional book keeping on $w$) yields the second summand.  The second way to obtain a larger PP from a smaller one is by adding a new row on top, which is longer than the original top row, but does not inflate the box to the left. Again a treatment similar to the computation in \eqref{Ableitung} yields the third summand. The third way to extend a PP is to add a row on top of length equal to the width of the box plus one and possibly an arbitrary bar graph. This finally yields the fourth summand and the functional equation is complete. 
\end{proof}
\noindent
\textbf{Remark.} As in the case of general SAPs \cite{Ham61} we can define a concatenation of two three-sided PPs. Roughly speaking, the one PP can be enlargened by inserting the other one at the top corner of the leftmost column, see Figure \ref{concatenation}.\\
\begin{figure}[htb]
\begin{center}
\includegraphics[height=20 mm,width=60mm]{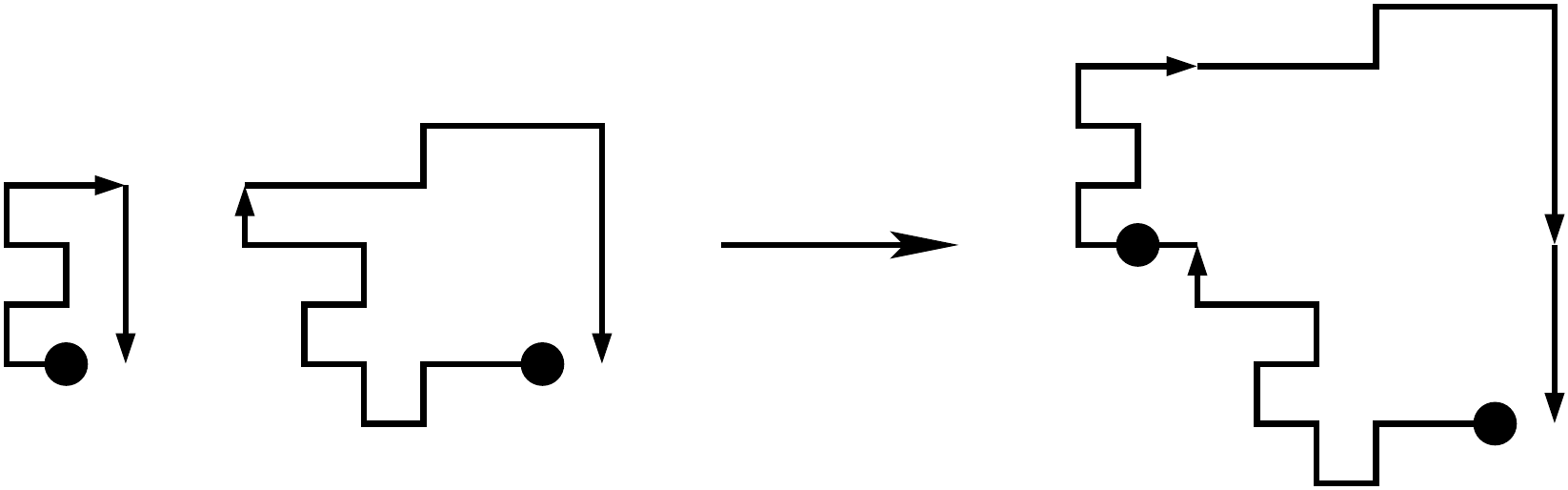}
\caption{\small Concatenating two 3-sided PPs}\label{concatenation}
\end{center}
\end{figure}

\noindent
The numbers $pp_3^{(m)}$ of three-sided PPs hence satisfy $pp_3^{(m+n)}\geq pp_3^{(m)}\cdot pp_3^{(n)}.$ This implies the existence of a connective constant $\beta,$ i.e. a representation $pp_3^{(m)}=\exp(\beta m+o(m)).$ The precise value for $\beta$ and the subexponential corrections are given in Section \ref{Analyticproperties}.    
The converse inequality holds for prudent walks, since breaking an $m+n$ step PW after $m$ steps leaves one with a pair of prudent walks of respective lengths $m$ and $n.$ 

\smallskip
We now turn to unrestricted PPs. They can be partitioned into eight subclasses according to their end point being $(1,0),$ $(0,1),$ $(-1,0)$ or $(0,-1)$ and their orientation (clockwise or counterclockwise around the origin). All eight classes can be transformed into each other by symmetry operations of the square. Hence it suffices to enumerate those PPs ending in $(1,0)$ which reach their endpoint via the vertex $(1,1)$ (clockwise). We denote this class by $\mathcal F$ and by $F(u,w,x):=F(t,u,w,x)$ its generating function. $G(u,w,x):=G(t,u,w,x)$ and $H(u,w,x):=H(t,u,w,x)$ are defined as the generating functions of the two auxiliary subclasses $\mathcal G \supseteq \mathcal H$ of $\mathcal F$ specified below. We have the following functional equation.
\begin{lemma}
The power series $F$, $G$ and $H$ satisfy a system of functional equations. For $X=F,G,H$ the single equations are of the form
\begin{equation}\label{feq4sd}
\begin{split}
X(u,w,x)=\;&\frac {tux \left( X \left( w,w,x \right) -X \left( u,w,x \right)  \right) }{w-u}\\ 
&+ {\frac {{t}^{2}ux \left( X \left( u,w,x \right) -X \left( u,ut,x \right)  \right) }{w-ut}} +I_X(u,w,x),
\end{split}
\end{equation}
where the formal power series $I_X(u,w,x):=I_X(t,u,w,x)$ is equal to
\begin{equation*}
I_X(u,w,x):=\begin{cases}G(x,x,u) &\textnormal{if } $X=F,$\\ t^2ux+t^2uxF(x,xt,w)+xH(x,x,u) &\textnormal{if } $X=G,$  \\ t^2uxw^{-1}G(x,xt,w) &\textnormal{if } $X=H.$ \end{cases}
\end{equation*}
\end{lemma}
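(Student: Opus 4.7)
The plan is to parallel the column-by-column decomposition underlying \eqref{feqR}, now extended by one extra catalytic variable $x$ which records the geometric data that four-sided prudent polygons need in order to account for inflation of the box to the left. The three series $F$, $G$, $H$ satisfy formally the same homogeneous part because the operation of placing a new row on the current top row preserves membership in each of the classes $\mathcal F$, $\mathcal G$, $\mathcal H$; what distinguishes them is the inhomogeneous piece $I_X$, which records either a ``base'' configuration or the moment at which the prudent walk moves from the top side of the box onto another side, causing a transition $\mathcal F \to \mathcal G \to \mathcal H$.

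I would start by isolating the top row of an element of the class $X$. Either the top-right corner of that row coincides with the top-right corner of the box of the whole polygon --- this is the base case, contributing to $I_X$ --- or the polygon is obtained from a strictly smaller one in the same class by placing a new row on top of its current top row. In the second situation three mutually exclusive sub-cases arise, exactly as in the three-sided argument: the new row is (i) no longer than the previous top row, (ii) longer but still within the old box width, or (iii) strictly wider than the old box, thereby inflating the box to the left. Cases (i) and (ii) yield the first two summands of \eqref{feq4sd} after the direct adaptation of the computation in \eqref{Ableitung}, now also tracking $w$; the variable $x$ is passive and the argument is completely indifferent to the choice of class, which accounts for the common shape of the equation for all three series.

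Case (iii) together with the base case assemble into $I_X$, and here the three classes genuinely couple. For $X=F$, a left-inflating row together with the remaining polygon on its left is precisely a $\mathcal G$-object rooted at the new top-left corner, hence the term $G(x,x,u)$. For $X=G$ the right-hand side splits into a single added column (the $t^2ux$ term), a further left-inflation that feeds back into $\mathcal F$ from a rotated viewpoint (the term $t^2ux\,F(x,xt,w)$, the substitution $w\mapsto xt$ recording the length of the new inflating column), and a direct transition into $\mathcal H$ (the term $x\,H(x,x,u)$). For $X=H$ only the $\mathcal G$-continuation is allowed, giving $t^2ux w^{-1} G(x,xt,w)$, where the factor $w^{-1}$ compensates for a row already accounted for outside the $\mathcal H$-piece. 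The main obstacle will be to give a precise definition of $\mathcal G$ and $\mathcal H$ as intermediate configurations after one, respectively two, corner-transitions of the boundary walk, and then to verify that every relabelling of catalytic variables in $I_X$ --- in particular the appearances of $xt$ and the factor $w^{-1}$ --- correctly transports the bookkeeping of ``top row length'' and ``distance to top-left box corner'' across a side-change. Once these geometric identifications are pinned down and checked to be bijective, the first two summands follow automatically from the three-sided computation.
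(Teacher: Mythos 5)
The core difficulty you have not resolved is that, in the paper, the three generating functions $F$, $G$, $H$ have their catalytic variables attached to \emph{different} geometric features, and correspondingly the recursive ``layer-peeling'' operation is \emph{different} for each class. In $F(u,w,x)$ the pair $(u,w)$ records the top row and its offset and $x$ the box height; in $G(u,w,x)$ the pair $(u,w)$ records the \emph{leftmost column} and its offset and $x$ the box \emph{width}; in $H(u,w,x)$ the pair $(u,w)$ records the \emph{bottom-most row} and $x$ the box height. Accordingly the homogeneous part of the equation for $F$ comes from adding a new \emph{top row}, for $G$ from adding a new \emph{left column}, and for $H$ from adding a new \emph{bottom row}. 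Your claim that ``placing a new row on the current top row preserves membership in each of the classes $\mathcal F$, $\mathcal G$, $\mathcal H$'' is incorrect: by definition $\mathcal G$ is obtained from $\mathcal F$ by attaching a piece that shifts the top side by \emph{exactly one} unit, so stacking further rows on top takes you out of $\mathcal G$, and tracking the top row would not even give a recursion in the catalytic variables that $G$ actually uses. The common algebraic shape of the three equations is due to rotational symmetry of the three operations, not to using the same operation three times.

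You flag ``give a precise definition of $\mathcal G$ and $\mathcal H$'' as the main obstacle and leave it open, but this is exactly what the argument must supply: $\mathcal G$ is defined as the unit square together with those members of $\mathcal F$ obtained by attaching a top piece that shifts the top by one unit, the left by at least one, and the bottom arbitrarily (so $\mathcal G$-polygons contain the top-left box corner); $\mathcal H \subseteq \mathcal G$ is obtained by gluing a piece to the left of the leftmost column, shifting the left side by exactly one and the bottom by at least one. Without these definitions and without the rotated recursion, your reading of the inhomogeneous terms cannot be verified and is in places wrong. For instance, $t^2uxF(x,xt,w)$ in $I_G$ does not come from ``a further left-inflation feeding back into $\mathcal F$''; it enumerates the \emph{minimal} $\mathcal G$-objects obtained by adding a top row of length equal to the box width plus one onto an arbitrary $\mathcal F$-polygon, and the substitution $u\mapsto x$, $w\mapsto xt$, $x\mapsto w$ is the bookkeeping dictionary between $F$'s catalytic variables and $G$'s. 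Likewise $xH(x,x,u)$ accounts for $\mathcal G$-objects obtained by an operation that shifts the bottom boundary, and the $w^{-1}$ in $I_H$ comes from the ``distance minus one'' convention in the definition of $H$'s catalytic variable $w$, not from ``compensating for a row already accounted for.'' As written the proposal would not produce the stated equations; fixing it essentially requires reconstructing the paper's definitions of $\mathcal G$, $\mathcal H$ and the three distinct recursions.
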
 
\begin{proof}
The proof relies on a decomposition similar to that of three-sided PPs. 
The PPs of the class $\mathcal F$ all contain the top right corner of their box. In the generating function $F(u,w,x)$ of $\mathcal F$ the variable $u$ marks the length of the top row, $w$ marks the distance of the top left corner of the top row to that of the box and $x$ marks the height of the box.

We define the classes $\mathcal G$ and $\mathcal H.$ $\mathcal G$ consists of the unit square together with those PPs in $\mathcal F$ which are obtained by attaching a piece (a collection of unit cells) on top of a given PP in $\mathcal F,$ such that the top side of the box is shifted by one unit, the left side by at least one unit and the bottom side by an arbitrary number of units, see Figure \ref{4sd}.
\begin{figure}[htb]
\begin{center}
\includegraphics[height=35 mm,width=115mm]{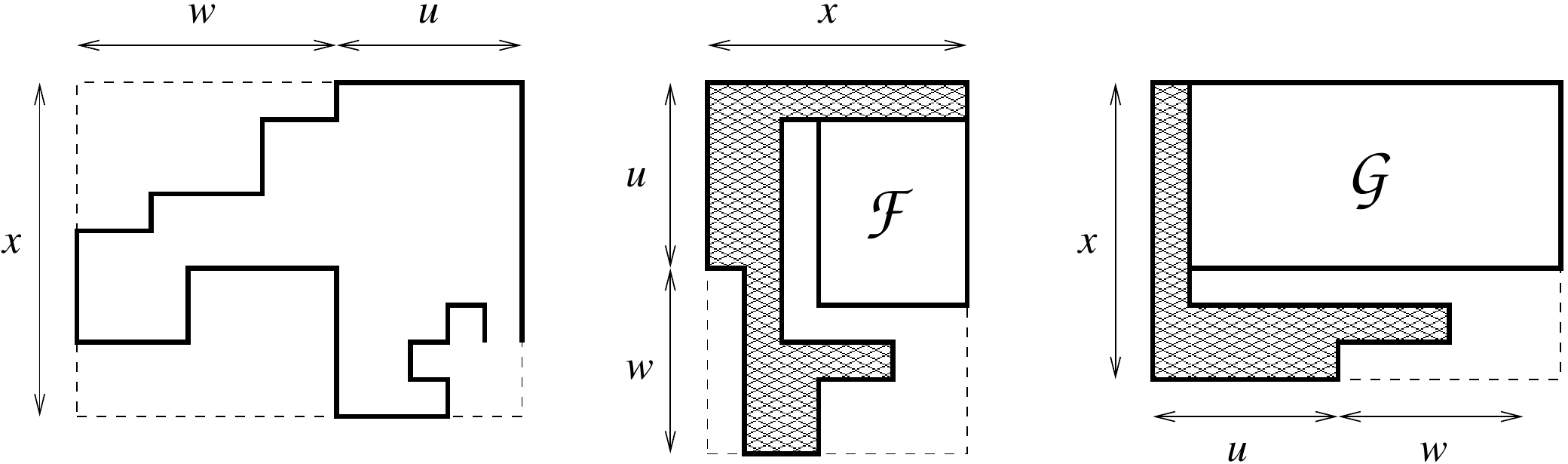}
\caption{\small Illustrations of the classes $\mathcal F,$ $\mathcal G$  and $\mathcal H,$ catalytic variables}\label{4sd}
\end{center}
\end{figure}

\noindent
Note, that the polygons in $\mathcal G$ contain the top left corner of their box. The catalytic variable $u$ in the generating function $G(u,w,x)$ marks the length of the leftmost column, $w$ marks the distance of the lower left corner of the leftmost column to the bottom left corner of the box and $x$ marks the width of the box. The class $\mathcal H$ is the subclass of $\mathcal G$ obtained by glueing a piece to the left of the leftmost column and thereby shifting the left side of the box by exactly one unit and the bottom side by at least one unit, see Figure \ref{4sd}. The variable $u$ in $H(u,w,x)$ marks the length of the bottom-most row, $w$ marks the distance minus one of the lower right corner of that row to the bottom right corner of the box. The variable $x$ marks the height of the box.

Now the functional equations are derived similarly to the three-sided case. Every PP in $\mathcal F$ can be extended by adding a new row on top which is shorter than or equal to the old top row or longer than the old top row, but does not inflate the box to the left. These two operations contribute the first and the second summand in the equation for $X=F,$ as in the proof of equation \eqref{feqR}. Inflating the box to the left yields a PP in $\mathcal G,$ explaining the expression for $I_{F}.$
 
As for the functional equation $G(u,w,x)$ of the  class $\mathcal G$ the first two summands on the rhs correspond to adding a new column to the left, the expression $xH(x,x,u)$ to adding a piece which shifts the bottom boundary of the box, in an analogous fashion as above. The unit square contributes $t^2ux,$ the term $t^2uxF(x,xt,w)$ corresponds to the ``minimal" polygons in $\mathcal G$ obtained by adding a top row on an arbitrary PP of length equal to the width of the box plus one.

The minimal PPs in $\mathcal H$ are those obtained by extending a PP in $\mathcal G$ adding a column to the left of length equal to the height of the box plus one. This explains the term for $I_H$ The rest of the rhs corresponds to adding a new bottom row.
\end{proof}
\section{Solution by the kernel method}\label{Solution}
The following result has already been obtained in \cite{PreBra} as the solution of an algebraic equation which arises from a different decomposition of the class. We derive it here for completeness and to recall the ``classic" kernel method as applied in \cite{Bou1}.
\begin{theorem}\label{bargraph}
The generating function $B(t,u):=B(t,u,1)$ of bar graphs counted by half-perimeter and width is equal to
\begin{equation}\label{gfB}
B(t,u)=\frac{1-t-u(1+t)t -\sqrt{t^2(1-t)^2u^2-2t\left(1-t^2\right)u+(1-t)^2}}{2tu}.
\end{equation}
\end{theorem}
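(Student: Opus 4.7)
The plan is to solve \eqref{feqB} by the classical kernel method, as used for instance in \cite{Bou1}. Direct substitution of $w=1$ is blocked by the factor $1/(1-w)$ in the functional equation, so I first treat $w$ as a free variable and clear denominators by multiplying \eqref{feqB} through by $(1-w)(1-wt)$. Gathering the $B(t,u,w)$ terms on the left (the two $ut^{2}w^{2}$ contributions cancel, which is why the kernel is genuinely quadratic rather than cubic in $w$) yields a polynomial identity of the form
$$K(t,u,w)\,B(t,u,w) \;=\; ut^{2}w(1-w) \;+\; uwt(1-wt)\,B(t,u,1),$$
with kernel
$$K(t,u,w) \;=\; 1 - \bigl(1 + t - ut + ut^{2}\bigr)\,w + t\,w^{2}.$$

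Next I view $K$ as a quadratic in $w$ and look for a root whose substitution into the polynomial identity is legal, i.e.\ a formal power series in $t$. The two roots are $W_{\pm} = (a \pm \sqrt{a^{2}-4t})/(2t)$ with $a := 1 + t - ut + ut^{2}$, and the minus branch satisfies $W_{-}(0,u) = 1$. Because $K(0,u,1) = 0$ and $K_{w}(0,u,1) = -1 \neq 0$, the implicit function theorem guarantees that $W := W_{-}$ extends to a unique formal power series in $t$ with polynomial coefficients in $u$. Substituting $w = W$ into the polynomial identity kills the left-hand side and gives the closed form
$$B(t,u,1) \;=\; -\frac{t(1 - W)}{1 - Wt}.$$

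The final step is to simplify this expression back into the stated algebraic form. I substitute the explicit radical formula for $W$, rationalise by multiplying numerator and denominator by the conjugate $2 - a - \sqrt{a^{2} - 4t}$, and compute the new denominator via $(2-a)^{2} - (a^{2} - 4t) = 4(1 - a + t) = 4ut(1-t)$. The new numerator factors through the identity
$$a(1+t) - 4t \;=\; (1-t)\bigl[(1-t) - ut(1+t)\bigr],$$
so a common factor $(1-t)$ cancels between numerator and denominator. What remains matches \eqref{gfB} once one expands $a^{2} - 4t = (1-t)^{2} - 2ut(1-t^{2}) + u^{2}t^{2}(1-t)^{2}$ to recover the radicand displayed in the theorem.

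The conceptual content, namely clearing denominators, selecting the kernel root that is a legitimate power-series substitution, and rationalising, is entirely standard; I expect the only real nuisance to be sign bookkeeping through the conjugate multiplication and verifying that the right branch of the square root is taken so that $B(t,u,1)$ vanishes at $t=0$.
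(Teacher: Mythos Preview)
Your proof is correct and follows essentially the same route as the paper's: clear denominators in \eqref{feqB}, identify the quadratic kernel in $w$, pick the unique power-series root (your $W_{-}$ is exactly the paper's $q(t,u)$ in \eqref{kernelsolB}), substitute to eliminate $B(t,u,w)$, and solve for $B(t,u,1)$. The only difference is cosmetic: you carry out the rationalisation and simplification to \eqref{gfB} in detail, whereas the paper just states the result after the substitution.
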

\begin{proof}
The functional equation \ref{feqB} is equivalent to
\begin{equation}\label{kfB}
\begin{split}
0= \left(t^2uw(1-w)-uwt(1-wt)-(1-w)(1-wt) \right)B(t,u,w) \\  
+tuw(1-wt) B(t,u,1) +t^2uw(1-w)
\end{split}
\end{equation}
The kernel equation
\[
0= \left(t^2uw(1-w)-uwt(1-wt)-(1-w)(1-wt) \right)
\]
is a quadratic equation in the catalytic variable $w$ and has the following unique \emph{power series} solution $q(t,u)$
\begin{equation}\label{kernelsolB}
q(t,u)=\frac{1+(1-u)t+ut^2-\sqrt{t^2(1-t)^2u^2-2t\left(1-t^2\right)u+(1-t)^2}}{2t}.
\end{equation}
Upon substituting $w=q(t,u)$ into Eq. \eqref{kfB}, the terms with $B(t,u,w)$ are cancelled and we  can solve for $B(t,u,1),$ which leads to \eqref{gfB}. 
\end{proof}
\noindent
\textbf{Remark.} In principle, $B(t,u,w)$ can also be computed, by substituting the result for $B(t,u,1)$ into Eq. \eqref{kfB}.

\smallskip
By setting $u=w=1$ in the bar graph generating function, adding the contribution of the degenerate two-sided PPs and multiplication by 2, we obtain the following result so far conjectured by series extrapolation from exact enumeration data \cite{DGJ}.
\begin{coro}
The generating function of two-sided prudent polygons is equal to
\begin{equation}\label{gfP}
\begin{split}
PP_2(t)&=\frac{1}{t}\left(\frac{1-3t+t^2+3t^3}{1-t}-\sqrt{(1-t)\left(1-3t-t^2-t^3\right)}\right)\\
&=4~z ^{2}+6~z ^{3}+12~z ^{4}+28~z ^{5}+72~z ^{6}+196~z ^{7}+552~z ^{8}+1590~z ^{9}\\
&+4656~z ^{10}+13812~z ^{11}+41412~z ^{12}+125286~z ^{13}+381976~z ^{14}\\&+1172440~z ^{15}+3620024~z ^{16}+11235830~z ^{17}+35036928~z ^{18}\\
&+109715014
~z ^{19}+344863872~z ^{20}+\ldots
\end{split}
\end{equation}
\end{coro}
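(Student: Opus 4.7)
The plan is to piece together the generating function $PP_2(t)$ directly from the bar graph formula already obtained in Theorem~\ref{bargraph}, using the decomposition of two-sided prudent polygons described in Section~\ref{Functionalequations}. Recall that by the reflection symmetry in the line $x=y$, the two-sided PPs split into two equinumerous classes (ending on top vs.\ ending on the right). Within the ``ending on top'' class, there are two cases: the degenerate PPs (horizontal rows extending leftwards from the origin), which contribute $t^2/(1-t)$, and the generic PPs, which are (upside-down) bar graphs counted by half-perimeter with no further constraints on width or rightmost column height. So I would compute
\[
PP_2(t) = 2\left(B(t,1,1) + \frac{t^2}{1-t}\right),
\]
where $B(t,1,1) = B(t,1)$.

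The next step is the algebraic simplification. First I would substitute $u=1$ into the closed form of Theorem~\ref{bargraph} to get
\[
B(t,1) = \frac{1-2t-t^2 - \sqrt{t^2(1-t)^2 - 2t(1-t^2) + (1-t)^2}}{2t}.
\]
The radicand factors as
\[
t^2(1-t)^2 - 2t(1-t^2) + (1-t)^2 = (1-t)\bigl[t^2(1-t) - 2t(1+t) + (1-t)\bigr] = (1-t)(1-3t-t^2-t^3),
\]
so the square root becomes $\sqrt{(1-t)(1-3t-t^2-t^3)}$.

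After adding $t^2/(1-t)$, the rational part becomes $(1-3t+t^2+3t^3)/(2t(1-t))$ (using $(1-t)(1-2t-t^2)+2t^3 = 1-3t+t^2+3t^3$), and the irrational part picks up a factor $(1-t)$ in the numerator that cancels against one $(1-t)$ already in the radicand when pulling it into the square root — except here it is cleaner to keep it outside. Doubling then yields exactly the stated closed form. I would not expect any obstacle beyond this bookkeeping; the one point requiring attention is making sure the correct analytic branch of the square root is selected, i.e.\ the one that agrees with $B(t,1)$ as a power series (constant term in the radicand equals $1$, and the minus sign in front of the root ensures a power series with nonnegative integer coefficients).

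Finally, the listed series coefficients can be read off by expanding the closed form to the required order, which is an easy mechanical verification (e.g.\ via a computer algebra system) and serves as a sanity check against the exact enumeration data of \cite{DGJ}.
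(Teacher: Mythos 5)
Your proposal is correct and follows exactly the route the paper takes: set $u=w=1$ (equivalently $u=1$) in the closed form for bar graphs, add the degenerate contribution $t^2/(1-t)$, and multiply by $2$ for the two reflected classes. The algebra checks out, including the factorisation of the radicand as $(1-t)(1-3t-t^2-t^3)$; the only tiny slip in your write-up is the remark about the $(1-t)$ factor ``cancelling into the radicand'' --- in fact it cancels against the $(1-t)$ in the common denominator $2t(1-t)$, which is how the stated expression $\frac{1}{t}\bigl(\cdots - \sqrt{(1-t)(1-3t-t^2-t^3)}\bigr)$ arises directly.
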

Now we turn to the three-sided case. Note that the sum of the catalytic counting parameters, namely the length of the top row and the distance of its top left corner to the top left corner of the box, is equal to the width of the polygon. We have the following result for the generic three-sided PPs ending on the right. It is derived in a similar way as the corresponding result on PWs in \cite{Bou2}.
\begin{theorem}The functional equation \ref{feqR} has a unique power series solution. For the generating function $R(t,w,w)$ of generic three-sided prudent polygons ending on the right and counted by half-perimeter and width we have an explicit expression as a an infinite sum of formal power series  
\begin{equation}\label{gfR}
R(t,w,w)=\sum_{k\ge 0}L\left(\left(tq^2  \right)^kw\right)\prod_{j=0}^{k-1}K\left(\left(tq^2  \right)^{j}w\right).
\end{equation}
Here   
\begin{equation}\label{qR}
q:=q(t,1)={\frac {{t}^{2}+1-\sqrt {1-4\,t+2\,{t}^{2}+{t}^{4}}}{2t}},
\end{equation}
with $q(t,u)$ as in \eqref{kernelsolB} in the proof of Theorem \ref{bargraph}. $K$ and $L$ are given by 
\begin{equation}\label{K}
K(w)=\frac { \left( 1-t \right) q-1- \left(  \left( 1-t+{t}^{2} \right) q -1\right)  \left( B \left( t,qw \right) +t \right) w}{1-t \left( 1+t \right) q- \left(  t\left(1-{t} -{t}^{3} \right) q+{t}^{2} \right)  \left( B \left( t,qw \right) +t \right) w}
\end{equation}
and
\begin{equation}\label{L}
L(w)={\frac { \left( 1+{t}^{2}- \left( 1-2\,t+2\,{t}^{2}+{t}^{4} \right) q \right) 
 \left( B \left( t,qw \right) +t \right) w}{1-t \left( 1+t \right) q- \left( t \left( 1-{t} -{t}^{3} \right) q+{t}^{2} \right)  \left( B \left( t,qw \right) +t \right) w}},
\end{equation}
where $B(t,u)$ is the generating function of bar graphs as in \eqref{gfB}. 
\end{theorem}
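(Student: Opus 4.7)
The plan is to apply the kernel method with two simultaneous substitutions. I first rearrange \eqref{feqR} to isolate the unknown $R(u,w)$ on the left and multiply through by the common denominator $(w-u)(w-ut)$. The kernel that emerges is
\[
K(u,w)=(w-u)(w-ut)+ut(w-ut)-ut^{2}(w-u)=w^{2}-u(1+t^{2})w+u^{2}t,
\]
and the equation takes the form $K(u,w)R(u,w)=N(u,w)$, where $N$ is linear in $\tilde R(w):=R(t,w,w)$, $r(u):=R(t,u,ut)$ and the known series $B(t,u)+t$. Uniqueness of the formal power series solution is routine: since $R(t,u,w)=O(t^{2})$, equating coefficients of $t^{n}$ in \eqref{feqR} recursively determines $R_{n}(u,w)$ in terms of the lower-order data $R_{k}$ with $k<n$.

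The key observation is that $K(u,w)$ factors as $(w-utq)(w-u/q)$ in $w$, and symmetrically, viewed as a polynomial in $u$, has roots $u=wq$ and $u=w/(tq)$, where $q:=q(t,1)$ from \eqref{qR} satisfies the quadratic identity $tq^{2}-(1+t^{2})q+1=0$ (a direct consequence of \eqref{kernelsolB}). Both substitutions $w=utq$ and $u=wq$ are admissible in the ring $\mathbb{Q}[u,w][[t]]$, since their right-hand sides have positive $t$-valuation, and both annihilate the kernel. Inserting $w=utq$ in the kernel form yields a relation of the shape
\[
\tilde R(utq)=\mathcal A(u)+\mathcal B(u)\,r(u),
\]
with $\mathcal A(u)=u(1-tq)(B(t,u)+t)$ and $\mathcal B(u)=(1-tq)\bigl(u(B(t,u)+t)+(1-q)^{-1}\bigr)$; inserting $u=wq$ yields a companion relation
\[
\tilde R(w)=\mathcal E(w)+\mathcal F(w)\,r(wq),
\]
with $\mathcal E(w)=-w(1-q)(B(t,wq)+t)$ and $\mathcal F(w)=(1-q)\bigl(t(1-tq)^{-1}-w(B(t,wq)+t)\bigr)$.

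I would then eliminate $r$ by specialising the first relation at $u=wq$ (which produces $\tilde R(tq^{2}w)=\mathcal A(wq)+\mathcal B(wq)\,r(wq)$) and substituting the resulting expression for $r(wq)$ into the second relation. This gives the contracting functional equation
\[
\tilde R(w)=L(w)+K(w)\,\tilde R(tq^{2}w),\qquad K(w)=\frac{\mathcal F(w)}{\mathcal B(wq)},\quad L(w)=\mathcal E(w)-K(w)\,\mathcal A(wq).
\]
Because $tq^{2}=O(t^{3})$, iterating this relation $n$ times leaves a remainder $\prod_{j=0}^{n-1}K\bigl((tq^{2})^{j}w\bigr)\cdot\tilde R\bigl((tq^{2})^{n}w\bigr)$ whose $t$-valuation tends to infinity with $n$, so the infinite telescoping sum converges in $\mathbb{Q}[w][[t]]$ and equals $\tilde R(w)$, yielding \eqref{gfR}. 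The step I expect to be the main obstacle is the last, purely algebraic one: matching the rational expressions $K(w)=\mathcal F(w)/\mathcal B(wq)$ and $L(w)=\mathcal E(w)-K(w)\mathcal A(wq)$ with the forms \eqref{K}, \eqref{L}. This requires systematic use of the quadratic identity $tq^{2}=(1+t^{2})q-1$ to derive simplifications such as $(1-qt)^{2}=(1-t)\bigl(1-t(1+t)q\bigr)$ and $t(1-q)^{2}=(1-t)\bigl((1-t)q-1\bigr)$, which reveal a common factor $(1-t)$ in both numerators and denominators that must be cancelled to recover the stated form.
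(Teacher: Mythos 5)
Your proposal follows essentially the same route as the paper: write the functional equation in kernel form, use the two small-root substitutions $w \mapsto qtu$ and $u \mapsto qw$ of the quadratic kernel to eliminate the unknowns in two stages, obtain the one-variable recursion $R(t,w,w) = L(w) + K(w)\,R(t,tq^2w,tq^2w)$, and iterate. The kernel factorisation you exhibit and the elimination scheme (solve first for $R(t,u,ut)$ via $w=qtu$, then specialise $u=qw$) are exactly the paper's moves, merely organised so that $\tilde R(utq)$ rather than $r(u)$ is isolated at the intermediate step.

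One factual slip needs fixing. You assert $tq^2 = O(t^3)$ to justify convergence of the iteration, but from \eqref{kernelsolB} one has $q = 1 + t + O(t^2)$, hence $tq^2 = t + O(t^2)$: its $t$-valuation is $1$, not $3$. The correct justification, which the paper gives, is that the \emph{numerator of $K(w)$} is $O(t^3)$ (since $(1-t)q-1 = O(t^3)$ and $1-(1-t+t^2)q = O(t^2)$), so the partial products $\prod_{j<k}K\bigl((tq^2)^jw\bigr)$ have $t$-valuation growing linearly in $k$; alternatively, since $R(t,w,w)$ has positive $w$-valuation, the remainder $R\bigl(t,(tq^2)^nw,(tq^2)^nw\bigr)$ already has $t$-valuation at least $n$. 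Either observation replaces your false premise and yields the same conclusion. One should also note, as the paper does, that $B(t,qw)$ is well defined as a formal power series because $[t^N]B(t,u)$ is a polynomial in $u$. With those repairs your sketch is sound, modulo the final algebraic simplifications you postpone, which do go through using $tq^2=(1+t^2)q-1$ and the identities you list.
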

\begin{proof}
The functional equation \eqref{feqR} is equivalent to 
\begin{equation}\label{kernelformR}
\begin{split}
0= \left(ut^2(w-u)-ut(w-ut)-(w-u)(w-ut)\right)R(t,u,w)  \\
+\left(ut\left(B(t,u)+t \right)(w-u)(w-ut)-t^2u(w-u)    \right)R(t,u,ut)\\
+ut(w-ut)R(t,w,w)\\
+ut(w-u)(w-ut)\left(B(t,u)+t \right).
\end{split}
\end{equation}
We first solve the kernel equation 
\[
\left(ut^2(w-u)-ut(w-ut)-(w-u)(w-ut)\right)=0
\]
for $u$ and $w.$ The unique power series solutions are $U(t,w)=q(t)w$ resp. $W(t,u)=q(t)tu,$ with $q(t)$ as in \eqref{qR}. We substitute $w=W(t,u)$ in Eq. \eqref{kernelformR} and  obtain an expression for $R(t,u,ut)$ in terms of $R \left( t,qtu,qtu \right),$ namely
\begin{equation}
R \left(t, u,ut \right) =\frac { \left( q-1 \right) R \left( t,qtu,qtu \right) + \left( 1-q \right) \left(1-tq\right) \left( B \left( t,u \right) +t \right) u}{1-qt- \left( 1-q \right) \left(1-tq\right) \left( B \left( t,u \right) +t \right) u}.
\end{equation}
Substitute this into Eq. \eqref{kernelformR} and set $u=U(t,w).$ This relates $R(t,w,w)$ and  $R\left(t, wtq^2,wtq^2\right)$ as follows:
\begin{equation}\label{iter}
R(t,w,w) = K(w)\cdot R\left(t, wtq^2,wtq^2\right) + L(w), 
\end{equation}
with $K(w)$ and $L(w)$ as in \eqref{K} and \eqref{L}. $K(w)$ and $L(w)$ are a formal power series in $t,$ which is seen as follows: $B\left(t,qw\right)$ is well-defined as a formal power series in $t$ as $\left[t^N\right]B(t,u)$ is a polynomial in $u$ of degree at most $N-1.$ Furthermore by the definition of $B$ we see $B(t,u)=t^2u+O(t^3).$ The denominator is now easily seen to be $1+O(t),$ so both $K(w)$ and $L(w)$ are well-defined as formal power series in $t.$
Inspecting the first few coefficients we see $(1-t)q-1=O\left(t^3\right)$ and $1- \left( 1-t+{t}^{2} \right) q=O\left(t^2\right),$ so the numerator of $K(w)$ is $O\left(t^3\right).$ In a similar way the numerator of $L(w)$ is seen to be $w\cdot O\left(t^2\right).$ Moreover we have $tq^2=t+O\left( t^2\right).$ So we can iterate Eq. \eqref{iter} and obtain formula \eqref{gfR}. 
\end{proof}%
\noindent
\textbf{Remark.} \textit{i)} We have the following alternative expressions for $K(w)$ and $L(w):$
\begin{equation}\label{Kalt}
K(w)=\frac{\left((1-q)(1-qt)qwt\left(B(t,qw)+t\right) +t^2q(q-1)\right)(q-1)  }
{ q(1-qt)^2\left((1-q)qwt\left(B(t,qw)+t\right) +t \right)   }
\end{equation}
and
\begin{equation}\label{Lalt}
L(w)=\frac{(1-qt)(1-q^2t)(q-1)qtw\left(B(t,qw)+t\right)  }{ q(1-qt)^2\left((1-q)qwt\left(B(t,qw)+t\right) +t \right) }.
\end{equation}
The expressions \eqref{K} and \eqref{L} were obtained by expressing powers of $q$ in terms of $q,$ e.g.
\begin{equation*}
\begin{split}
q^2=\left(t\left(t^2+1\right)q-t\right)/t^2,\\
q^3=\left(t\left(t^4+2t^2-t+1\right)q-t^3-t\right)/t^3,\\
q^4=\left(tq\left(t^6+3t^4-2t^3+3t^2-2t+1\right)-t+t^2-2t^3-t^5\right)/t^4.
\end{split}
\end{equation*}
\noindent
\textit{ii)} In principle one could also compute $R(t,u,w).$ To obtain the generating function of all three-sided PPs we sum up the contributions of the degenerate PPs and those ending on top, multiply by two and obtain
\[
PP_{3}(t)=2\left(\frac{t^2}{1-t}+B(t,1)+R(t,1,1)\right).
\]
The first few terms of the series $PP_3(t)$ are
\[
\begin{split}
PP&_3(t)=6\,{t}^{2}+10\,{t}^{3}+24\,{t}^{4}+66\,{t}^{5}+198\,{t}^{6}+628\,{t}^{7}+2068\,{t}^{8}+7004\,{t}^{9}+24260\,{t}^{10}\\
&+85596\,{t}^{11}+306692\,{t}^{12}+1113204\,{t}^{13}+4085120\,{t}^{14}+15131436\,{t}^{15}+56495170\,{t}^{16}\\
&+212377850\,{t}^{17}+803094926\,{t}^{18}+3052424080\,{t}^{19}+11653580124\,{t}^{20}+\ldots.
\end{split}
\]

\section{Analytic properties of the generating functions}\label{Analyticproperties}
So far we have considered the generating functions in question as \emph{formal} power series. A crude estimate on the number of SAPs of half-perimeter $n$ is $4^{2n}$ which is the total number of all nearest neighbour walks on the square lattice of length $2n.$ So the series $PP_2(t)$ and $PP_{3}(t)$ converge at least in the open disc $\{|t|<1/16\}$ and represent analytic functions there. This section deals with the analytic properties of these functions.  
We first discuss the analytic structure of the generating function of two-sided PPs.
\begin{prop}The generating function $PP_2(t),$ cf. \eqref{gfP}, is algebraic of degree 2, with its dominant singularity a square root singularity at $t=\rho,$ where $\rho$ is the unique real root of the equation
\begin{equation}\label{eqrho}
\frac{1-4\,t+2\,{t}^{2}+{t}^{4}}{1-t}=1-3t-t^2-t^3=0.
\end{equation}
With $\theta=\sqrt[3]{26+6\sqrt{33}}$ the exact value for $\rho$ can be written as
\[
\rho=\frac{\theta^2-\theta-8}{3\theta}=0.2955977\ldots.
\]
The number $pp_2^{(m)}$ of two-sided PPs of half-perimeter $m$ is asymptotically
\[
pp_2^{(m)}\sim A\cdot \rho^{-m} \cdot m^{-3/2}\qquad(m\to\infty),
\]  
where 
\[
A=\frac{\sqrt{(-37+11\sqrt{33})\theta^2+(-152+8\sqrt{33})\theta+32}}{4\,\sqrt{6\pi}\rho}=0.8548166\ldots.
\]
\end{prop}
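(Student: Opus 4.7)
The plan is to start from the explicit formula \eqref{gfP}, which writes $PP_2(t)$ as a rational function plus $\sqrt{D(t)}$ with $D(t):=(1-t)(1-3t-t^2-t^3)$. This makes algebraicity of degree at most $2$ manifest, and since $D$ is squarefree (its four complex roots are all simple), the minimal polynomial has degree exactly $2$. The singularities of $PP_2$ can only come from the zeros of $D$ (branch points) or from the apparent pole of the rational part at $t=1$.

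To pin down the dominant singularity I would analyse the cubic $g(t):=1-3t-t^2-t^3$ directly. An elementary sign analysis ($g(0)>0$, $g(1/2)<0$) gives a unique positive real root $\rho\in(0,1/2)$, and Vieta applied to $t^3+t^2+3t-1=0$ shows that the product of the three roots equals $1$; hence the complex conjugate pair $z,\bar z$ satisfies $|z|^2=1/\rho>1$. Thus $|\rho|<1\leq|z|$, so $\rho$ is the singularity of smallest modulus. The closed form for $\rho$ then follows from the standard Cardano recipe: the substitution $t=s-1/3$ produces the depressed cubic $s^3+(8/3)s-52/27=0$, which has negative discriminant, and the identity $(26+6\sqrt{33})(26-6\sqrt{33})=-512$ lets one write both real cube roots of Cardano's formula in terms of the single $\theta=\sqrt[3]{26+6\sqrt{33}}$, giving $\rho=(\theta^2-\theta-8)/(3\theta)$ after undoing the shift.

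For the asymptotics I would invoke the transfer theorem of Flajolet--Odlyzko. Polynomial division yields $1-3t-t^2-t^3=(\rho-t)Q(t)$ with $Q(t)=t^2+(1+\rho)t+(\rho^2+\rho+3)$, and $Q(\rho)=3\rho^2+2\rho+3>0$. Writing $\rho-t=\rho(1-t/\rho)$, one obtains the local expansion near $t=\rho$
\[
PP_2(t)=\phi(t)-\frac{1}{\rho}\sqrt{\rho(1-\rho)\,Q(\rho)}\;(1-t/\rho)^{1/2}+O\!\bigl((1-t/\rho)^{3/2}\bigr),
\]
with $\phi$ analytic at $\rho$. The standard transfer $[t^m](1-t/\rho)^{1/2}\sim -\rho^{-m}m^{-3/2}/(2\sqrt{\pi})$ then yields the asymptotics $pp_2^{(m)}\sim A\,\rho^{-m}m^{-3/2}$ with prefactor $A=(2\sqrt{\pi}\,\rho)^{-1}\sqrt{\rho(1-\rho)(3\rho^2+2\rho+3)}$. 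Using the defining relation $\rho^3=1-\rho^2-3\rho$ to reduce, one finds the striking simplification $(1-\rho)(3\rho^2+2\rho+3)=4\rho(\rho+2)$, so $A$ collapses to the compact form $A=\sqrt{(\rho+2)/\pi}$.

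The main obstacle is purely algebraic: to match this compact $A$ with the explicit expression displayed in the proposition, one substitutes $\rho=(\theta^2-\theta-8)/(3\theta)$ into $96\,\rho^2(\rho+2)$ and reduces higher powers of $\theta$ via $\theta^3=26+6\sqrt{33}$ (so $\theta^4=(26+6\sqrt{33})\theta$). A patient expansion, followed by rationalising the denominator using $(13+3\sqrt{33})(13-3\sqrt{33})=-128$, recovers $(-37+11\sqrt{33})\theta^2+(-152+8\sqrt{33})\theta+32$ and hence the stated form of $A$. Apart from this bookkeeping, every step is a direct application of standard tools: Vieta/Cardano for the location of $\rho$, and Flajolet--Odlyzko singularity analysis for the asymptotics.
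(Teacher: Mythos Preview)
Your argument is correct and complete. The paper actually states this proposition without proof, treating it as a routine consequence of the explicit formula \eqref{gfP} together with standard singularity analysis; your write-up supplies exactly those details in the expected way (squarefreeness of $D$, Vieta to locate the dominant root, Cardano for the closed form, Flajolet--Odlyzko transfer for the asymptotics). One bonus worth keeping: your intermediate simplification $(1-\rho)(3\rho^2+2\rho+3)=4\rho(\rho+2)$ gives the much cleaner constant $A=\sqrt{(\rho+2)/\pi}$, which is not recorded in the paper and is arguably preferable to the $\theta$-expression displayed there.
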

\noindent
\textbf{Remark:} \textit{i)} The generating function of two-sided prudent \emph{walks} is algebraic with its dominant singularity a simple pole at $\overline{\sigma}=0.403\ldots.$ Its coefficients are asymptotically equal to $\kappa\cdot\overline{\sigma}^{-m},$ where $\kappa=2.51\ldots,$ cf. \cite{Bou2}.\\
\textit{ii)} The asymptotic number of bar graphs as well as staircase polygons (counted by half-perimeter) and Dyck paths (by half-length) is of the form $\kappa\cdot\mu^n\cdot n^{-3/2}.$ Furthermore, the area random variables in the fixed-perimeter (fixed-length) ensembles of all three models are known to converge weakly to the \emph{Airy distribution} \cite{Duchon,Ric,Tak}.

\smallskip
The analytic structure of $PP_3(t)$ is far more complicated due to the analytic structure of $R(t,1,1),$ which is stated in the main result Theorem \ref{singstr}. In what follows we make frequent use of the following facts about the series $q:$
\begin{lemma}\label{Facts about Chuck Norris}
 The series $q,$ $(1-t)q-1,$ $q^2t,$ $t(1+t)q$ and $ t\left(1-{t} -{t}^{3} \right) q+{t}^{2}$ have non-negative integer coefficients. For $|t|\leq\rho$ we have the estimates
 \begin{equation}
 |q|\leq\frac{|t|^2+1}{2|t|},\;\left|q^2t\right|\leq1,\; \left|(1-t)q-1\right|\leq\rho,\;\left|1-t(1+t)q\right| \geq \rho.
 \end{equation}
 Equality holds if and only if $t=\rho.$ Furthermore 
 \begin{equation}
 q\left(\rho\right)=\frac{\rho^2+1}{2\rho}=\frac{1}{\sqrt{\rho}}.
 \end{equation}
\end{lemma}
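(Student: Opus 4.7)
The plan is to reduce every claim to an analytic or coefficient-wise statement about $q$ itself, using the kernel identity $tq^2 = (1+t^2)q - 1$ repeatedly. First I would verify $q(\rho) = 1/\sqrt\rho$: multiplying $1 - 3\rho - \rho^2 - \rho^3 = 0$ by $(1-\rho)$ gives $1 - 4\rho + 2\rho^2 + \rho^4 = 0$, equivalently $(\rho^2+1)^2 = 4\rho$, hence $\rho^2 + 1 = 2\sqrt\rho$; substituting into \eqref{qR} (where the square root vanishes at $t = \rho$) yields $q(\rho) = (\rho^2+1)/(2\rho) = 1/\sqrt\rho$.

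For the coefficients of $q$, rewriting the kernel relation as $q_n = (q^2)_{n-1} - q_{n-2}$ (for $n \ge 1$, with $q_0 = 1$) immediately gives integrality by induction. Non-negativity and the monotonicity $\Delta q_n := q_n - q_{n-1} \ge 0$ can be proved simultaneously by strong induction: using $q_{n-1} = (q^2)_{n-2} - q_{n-3}$ and expanding the telescoping sum for $(q^2)_{n-1} - (q^2)_{n-2}$ one obtains
\[
\Delta q_n = 2\Delta q_{n-1} + q_{n-3} + \sum_{k=1}^{n-2} q_k \Delta q_{n-1-k},
\]
a sum of non-negative terms under the inductive hypothesis. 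The remaining series are then handled by kernel identities exposing non-negative structure: $q^2 t = (q-1) + t^2 q$, $t(1+t)q = tq + t^2 q$, and $(1-t)q - 1 = tq(q-1-t)$ with $q - 1 - t = \sum_{n \ge 2} q_n t^n$. Finally, substituting $t^4 q = t^2((1+t^2)q - 1)$ into the defining sum yields $t(1-t-t^3)q + t^2 = t(q - t^2 q^2)$; non-negativity of $q - t^2 q^2$ reduces to $q_n - (q^2)_{n-2} \ge 0$, which by the same telescoping argument equals $\Delta q_{n-1} + \sum_{k=0}^{n-2} q_k \Delta q_{n-1-k}$ and is non-negative by the monotonicity established above.

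For the analytic bounds on $|t| \le \rho$, non-negativity of coefficients gives $|f(t)| \le f(|t|)$ for each of the series in question, reducing every inequality to a real-variable statement on $[0, \rho]$. The bound $q(s) \le (s^2+1)/(2s)$ is immediate from \eqref{qR}, since the square root term there is real and non-negative on this interval. For $sq(s)^2 \le 1$, both factors $s$ and $q(s)^2$ are non-negative and non-decreasing in $s$, so their product is as well, with value $\rho \cdot (1/\sqrt\rho)^2 = 1$ at $s = \rho$. For $(1-s)q(s) - 1 \le \rho$, the identity $(1-s)q(s) - 1 = sq(s)(q(s) - 1 - s)$ displays it as a product of three non-negative, non-decreasing factors on $[0, \rho]$, and its value at $\rho$ equals $\rho$ because $(1+\rho)\sqrt\rho = 1 - \rho$ (squaring recovers the defining equation of $\rho$). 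For the lower bound, the reverse triangle inequality gives $|1 - t(1+t)q| \ge 1 - |t|(1+|t|)q(|t|)$, and the same reasoning shows that $s(1+s)q(s)$ is non-decreasing on $[0, \rho]$ with value $(1+\rho)\sqrt\rho = 1 - \rho$ at $\rho$, so the right side is $\ge \rho$.

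Equality analysis: $|f(t)| = f(|t|)$ forces $t$ to be a non-negative real (since each series has infinitely many positive coefficients), while tightness of the real bound at $|t| = \rho$ forces $t = \rho$; for the lower bound, the reverse triangle inequality additionally requires $t(1+t)q$ to be real and non-negative, which is automatic once $t = \rho$. The principal obstacle is the non-negativity of $t(1-t-t^3)q + t^2$, which admits no obvious direct decomposition and requires both the identity reduction to $t(q - t^2 q^2)$ and the monotonicity of $(q_n)$; all other statements follow either from the explicit formula for $q$ or from a single application of the kernel relation.
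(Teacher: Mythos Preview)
The paper states this lemma without proof; it is presented simply as a list of facts about $q$ to be used in the subsequent lemmas. Your argument supplies a complete justification and is correct.

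One small glitch: in your derivation of the identity $t(1-t-t^3)q + t^2 = t(q - t^2 q^2)$, the phrase ``substituting $t^4 q = t^2((1+t^2)q - 1)$'' is garbled --- that equation is false as written. What actually works is to observe
\[
-t^2 q - t^4 q + t^2 \;=\; -t^2\bigl((1+t^2)q - 1\bigr) \;=\; -t^2\cdot tq^2 \;=\; -t^3 q^2,
\]
which immediately gives the claimed identity when combined with the leading $tq$. The identity itself, and the subsequent reduction of non-negativity to $q_n - (q^2)_{n-2} = \Delta q_{n-1} + \sum_{k=0}^{n-2} q_k\,\Delta q_{n-1-k} \ge 0$, are both correct.

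Beyond that, your approach --- establishing integrality and monotonicity of $(q_n)$ by strong induction on the recursion $q_n = (q^2)_{n-1} - q_{n-2}$, then reducing each analytic bound on $|t|\le\rho$ to a real-variable monotonicity statement on $[0,\rho]$ via non-negative coefficients --- is clean and efficient. The verification that $(1+\rho)\sqrt{\rho} = 1-\rho$ (by squaring back to the defining cubic) neatly handles both the upper bound on $(1-t)q-1$ and the lower bound on $1 - t(1+t)q$. Your equality analysis, forcing $t>0$ real via the strict triangle inequality for a series with infinitely many positive coefficients, is also sound; it uses implicitly that $q$ extends continuously to the closed disc $|t|\le\rho$, which holds since the singularity at $\rho$ is of square-root type.
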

\noindent
The singular behaviour of $B\left(t,q\left(q^2t\right)^N\right)$ and $B(t,qw)$ plays an important role in the study of $R(t,1,1).$
\begin{lemma}\label{Sigmas}
For $N\ge 0$ the dominant singularity of $B\left(t,q\left(q^2t \right)^N \right)$ is $\sigma_N,$ which is the unique solution in the interval $[0,\rho)$ of the equation
\[
u(t)-q\left(q^2t \right)^N=\frac{1}{t}\cdot \frac{1-\sqrt{t}}{1+\sqrt{t}}-q\left(q^2t \right)^N=0.
\]
In particular, $\sigma:=\sigma_0=\tau^2=0.2441312\ldots,$ where $\tau$ is the unique real root of the polynomial $t^5+2t^2+3t-2.$ The sequence $\left\{\sigma_N,\;N\ge 0\right\}$ is monotonically increasing and converges to $\rho.$ Furthermore $B(t,qw)$ is analytic in the polydisc $\{|t|<\rho\}\times \{|w|<\sqrt{\rho}\}.$
\end{lemma}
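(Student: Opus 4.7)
The plan is to locate the singularity curve of $u\mapsto B(t,u)$ and intersect it with the one-parameter family $u=q(t)^{2N+1}t^N$. From \eqref{gfB}, the singularities of $B(t,u)$ come only from the radicand; solving the quadratic $t^2(1-t)^2u^2-2t(1-t^2)u+(1-t)^2=0$ in $u$ yields the branches $u_\pm(t)=(1\pm\sqrt t)^2/(t(1-t))$, whose smaller one is precisely the function $u(t)=(1-\sqrt t)/(t(1+\sqrt t))$ in the statement. Thus $\sigma_N$ is precisely the positive $t$ at which $q(t)^{2N+1}t^N$ first meets $u(t)$.

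For existence and uniqueness, I would set $F_N(t):=u(t)-q(t)^{2N+1}t^N$ on $[0,\rho]$. Since $u$ is strictly decreasing on $(0,1)$ from $+\infty$ to $0$, and $q(t)^{2N+1}t^N$ inherits nonneg coefficients from $q$ and hence is strictly increasing on $(0,\rho]$, the function $F_N$ is strictly decreasing. At the endpoints, $F_N(0^+)=+\infty$, and using $q(\rho)=1/\sqrt\rho$ from Lemma \ref{Facts about Chuck Norris} gives $q(\rho)^{2N+1}\rho^N=1/\sqrt\rho$; a short computation using $\rho^3+\rho^2+3\rho-1=0$ (the second factorization in \eqref{eqrho}) yields $u(\rho)=1$, so $F_N(\rho)=1-1/\sqrt\rho<0$. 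The IVT then produces the unique root $\sigma_N\in(0,\rho)$. For the explicit form of $\sigma_0$, I would use the kernel relation $tq^2-(1+t^2)q+1=0$ satisfied by $q$: substituting $u(t)$ in place of $q$, setting $\tau=\sqrt t$ and clearing denominators collapses the identity to $\tau^5+2\tau^2+3\tau-2=0$, giving $\sigma_0=\tau^2$.

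For monotonicity and limit, write $g_N(t):=q(t)^{2N+1}t^N$; then $g_{N+1}/g_N=tq^2$, which is strictly less than $1$ on $(0,\rho)$ by Lemma \ref{Facts about Chuck Norris}. So $g_N$ decreases pointwise in $N$ on $(0,\rho)$, $F_N$ increases pointwise in $N$, and the unique zero $\sigma_N$ shifts to the right. Boundedness above by $\rho$ yields $\sigma_N\uparrow\sigma_\infty\leq\rho$; if $\sigma_\infty<\rho$, then $g_N(\sigma_\infty)\to 0$ forces $F_N(\sigma_\infty)\to u(\sigma_\infty)>0$, contradicting $F_N(\sigma_N)=0$ with $\sigma_N\to\sigma_\infty$ by continuity. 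Hence $\sigma_\infty=\rho$.

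Finally, for analyticity of $B(t,qw)$ on the polydisc, since $B$ and $q$ both have nonneg coefficients, $|B(t,q(t)w)|\leq B(|t|,q(|t|)|w|)$, which is finite as long as $q(|t|)|w|<u(|t|)$. On $[0,\rho]$ the function $s\mapsto u(s)^2-q(s)^2\rho$ is strictly decreasing (a decreasing term minus an increasing term) and vanishes at $s=\rho$ thanks to $u(\rho)^2=1=q(\rho)^2\rho$; hence $q(s)\sqrt\rho<u(s)$ strictly on $[0,\rho)$, giving $q(|t|)|w|<q(|t|)\sqrt\rho<u(|t|)$ throughout the open polydisc. Absolute convergence of the double series $\sum b_{n,k}t^n(q(t)w)^k$ follows, and $B(t,qw)$ is analytic there. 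The main obstacle I foresee is this final sharpness: the bound goes through only because of the identity $u(\rho)=1$ (equivalent to $\rho^3+\rho^2+3\rho=1$), which is what forces the $w$-radius to be exactly $\sqrt\rho$ rather than something smaller.
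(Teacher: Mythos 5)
Your argument follows the paper's proof essentially step for step: same characterization of the singular locus of $B(t,u)$ via the vanishing radicand, same solution branch $u(t)$, same monotonicity-plus-endpoint argument (with $u(\rho)=1$, $q(\rho)=1/\sqrt\rho$) for existence and uniqueness of $\sigma_N$, same monotonicity in $N$ via $tq^2<1$, and the same limit argument for $\sigma_N\to\rho$. Your two additions are welcome clarifications rather than a different route: you actually carry out the algebra reducing $u(t)=q(t)$ to $\tau^5+2\tau^2+3\tau-2=0$ (the paper just states the result), and for the polydisc analyticity you argue via absolute convergence of the nonnegative double series instead of the paper's slightly terse bound $|u(t)/q(t)|\ge\sqrt\rho$, which is a cleaner way of establishing the same inequality $u(s)>q(s)\sqrt\rho$ on $[0,\rho)$.
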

\begin{proof}
$B(t,u)$ is singular if and only if
\[
t^2(1-t)^2u^2-2t\left(1-t^2\right)u+(1-t)^2=0.
\]
The relevant solution $u(t)$ with $u(\rho)=1$ is
\begin{equation}\label{defu}
u(t)=\frac{1}{t}\cdot \frac{1-\sqrt{t}}{1+\sqrt{t}}.
\end{equation}
$B\left(t,q\left(q^2t\right)^N\right)$ is singular if $q\left(q^2t\right)^N=u(t).$ This equation has a solution $\sigma_N$ in the interval $(0,\rho),$ as $u(t)\rightarrow 1$ and $q\left(q^2t\right)^N\rightarrow (\rho^2+1)/2\rho=1/\sqrt{\rho}>1,$ for $t\rightarrow \rho.$ Here $u$ is  strictly decreasing and $q\left(q^2t\right)^N$ strictly increasing. We further see that $\sigma_N$ converges to $\rho,$ as for arbitrary fixed $t$ with $0<t<\rho$ we can chose $N$ sufficiently large, such that $u(t)> q\left(q^2t\right)^N,$ see Lemma \ref{Facts about Chuck Norris}. So $\sigma_N\geq t,$ which shows the convergence. Monotonicity follows, as $q\left(q^2t\right)^{N+1}<q\left(q^2t\right)^N$ for $t\in (0,\rho).$  All these singularities are square root singularities, as the expressions under the root are analytic in $|t|<\rho.$ $B(t,qw)$ is singular, if $w=u(t)/q$ and hence
\[
|w|=\frac{|u(t)|}{|q|}\geq \sqrt{\rho} u(\rho)=\sqrt{\rho},
\]
with equality if and only if $t=\rho.$ So there is no singularity inside the polydisc. 
\end{proof}
\noindent
Now we are ready to state the main result, which is proven in the subsequent lemmas.
\begin{theorem}\label{singstr}
The function $R(t,1,1)$ is analytic in the disc $\left\{|t|<\sigma \right\}$ with its unique dominant singularity a square root singularity at $\sigma.$ Moreover it is meromorphic in the slit disc 
\[
D_{\sigma,\rho}=\left\{|t|<\rho \right\}\setminus [\sigma,\rho),
\]
and it has infinitely many square root singularities in the set $\left\{\sigma_N,\,N=0,1,2,\ldots\right\}.$ 
In particular, $R(t,1,1)$ is not D-finite.
\end{theorem}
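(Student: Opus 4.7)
The plan hinges on the one-step recurrence
\begin{equation*}
R_N(t) = L((tq^2)^N) + K((tq^2)^N)\,R_{N+1}(t), \qquad R_N(t) := R(t,(tq^2)^N,(tq^2)^N),
\end{equation*}
which is implicit in the derivation of \eqref{gfR} and which, iterated from $R_0(t) = R(t,1,1)$, produces the infinite sum at $w=1$. By Lemma \ref{Facts about Chuck Norris}, $|tq^2| < 1$ strictly on compact subsets of $\{|t| < \rho\}$, so the powers $(tq^2)^k$ decay geometrically; combined with uniform bounds on $K$ and $L$ away from the discrete set $\{\sigma_N\}_{N \geq 0}$ and the zeros of the shared denominator in \eqref{K}--\eqref{L}, this yields uniform convergence of the sum on compact subsets of $D_{\sigma,\rho}$ with those points removed, hence meromorphy on $D_{\sigma,\rho}$.

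To locate the singularities I invoke Lemma \ref{Sigmas}: inside $D_{\sigma,\rho}$ every $B(t,q(tq^2)^k)$ is analytic, and as $t$ approaches $\sigma_k$ from the slit disc it develops a square-root branch point. The tail $R_{N+1}(t)$ involves only $B(t,q(tq^2)^k)$ with $k \geq N+1$, and since $\sigma_{N+1},\sigma_{N+2},\ldots > \sigma_N$, it is analytic in a full neighbourhood of $\sigma_N$; both $L((tq^2)^N)$ and $K((tq^2)^N)$, by contrast, inherit a $\sqrt{\,\cdot\,}$-branch from $B(t,q(tq^2)^N)$ at $\sigma_N$. Hence $R_N$ is square-root singular at $\sigma_N$ \emph{unless} the singular parts in $L((tq^2)^N) + K((tq^2)^N)\,R_{N+1}(\sigma_N)$ cancel. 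To transport the singularity back to $R_0$, I would observe that for $j < N$ the factor $K((tq^2)^j)$ is analytic at $\sigma_N$ from within $D_{\sigma,\rho}$ (its own $B$-singularity sits at $\sigma_j < \sigma_N$, already on the slit and hence removed), so the partial product $\prod_{j=0}^{N-1} K((tq^2)^j)$ is analytic and generically nonzero at $\sigma_N$; applying the recurrence backwards cascades the singularity of $R_N$ all the way to $R_0$. The dominant singularity claim is the base case $N=0$: here $R_1$ is analytic in the whole disc $\{|t|<\sigma_1\}$, so the singular behaviour of $R_0$ at $\sigma$ is governed entirely by the square-root branches of $L(1)$ and $K(1)$.

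The main obstacle is non-cancellation at each stage. To handle it I would use the alternative forms \eqref{Kalt}--\eqref{Lalt}, in which $K(w)$ and $L(w)$ are explicit M\"obius-type rational functions of the same quantity $(B(t,qw)+t)w$ over a common denominator. Writing $B(t,q(tq^2)^N) = B_0(t) + B_1(t)\sqrt{1-t/\sigma_N} + O(1-t/\sigma_N)$ with $B_1(\sigma_N) \neq 0$, the coefficient of $\sqrt{1-t/\sigma_N}$ in $L((tq^2)^N) + K((tq^2)^N)\,R_{N+1}(\sigma_N)$ reduces to an explicit algebraic expression; using the inequalities of Lemma \ref{Facts about Chuck Norris} to ensure that the shared denominator stays bounded away from zero throughout $[\sigma,\rho)$, I would check that this coefficient does not vanish. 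Non-D-finiteness is then immediate: a univariate D-finite function satisfies a linear ODE with polynomial coefficients and therefore has only finitely many singularities in any bounded region, whereas $R(t,1,1)$ carries distinct square-root singularities at every $\sigma_N$, with the sequence accumulating at $\rho$.
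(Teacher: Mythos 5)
Your overall architecture --- iterate the recurrence \eqref{iter} to rebuild \eqref{gfR}, establish compact convergence on the slit disc, localise the $\sigma_N$-singularity in the tail $L((tq^2)^N)+K((tq^2)^N)R_{N+1}$, and then rule out cancellation --- coincides with the paper's path through Lemmas \ref{Lemma45}, \ref{convergence} and \ref{Lemma47}. The convergence sketch via geometric decay of $(tq^2)^k$ and bounds on $K,L$ is essentially Lemma \ref{convergence}, and your isolation of the square-root branch at $\sigma_N$ in $L((tq^2)^N)$ and $K((tq^2)^N)$, with $R_{N+1}$ analytic nearby, matches the paper's reduction. One inaccuracy worth flagging: Lemma \ref{Lemma45} proves that the shared denominator $1-T(t,w)$ of $K$ and $L$ is bounded away from zero on the whole relevant region, so there are no poles coming from it that you would need to ``avoid.''

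The genuine gap is the non-cancellation step, which is the heart of the theorem. You propose to expand $B(t,q(tq^2)^N)$ in powers of $\sqrt{1-t/\sigma_N}$ and verify that the leading singular coefficient of $L((tq^2)^N)+K((tq^2)^N)\,R_{N+1}(\sigma_N)$ does not vanish, calling it ``an explicit algebraic expression.'' But $R_{N+1}(\sigma_N)$ is the value at $\sigma_N$ of the transcendental function $R(t,w,w)$; the cancellation condition \eqref{cancel} is therefore a transcendental, not algebraic, equation in $\sigma_N$, and there is no identity that forces non-vanishing for each individual $N$. A sign argument also fails: both $-P_L/P_K=(1-\sigma_N q(\sigma_N))/(q(\sigma_N)-1)$ and $R(\sigma_N,\cdot,\cdot)$ are positive. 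What the paper actually proves is the weaker --- and precisely stated --- claim that infinitely many $\sigma_N$ are singular, via a monotonicity comparison: the left side of \eqref{cancel} is ultimately strictly decreasing in $N$, while the right side, $R$ evaluated at $(\sigma_N,\,u(\sigma_N)q(\sigma_N)\sigma_N,\cdot)$, is ultimately strictly increasing because $R(t,w,w)$ has nonnegative Taylor coefficients and both arguments increase monotonically to $\rho$ and $\sqrt{\rho}$ respectively. Hence equality in \eqref{cancel} can occur for at most finitely many $N$. Without this comparison, or a substitute for it, your plan does not establish non-cancellation. Finally, your appeal to the prefactor $\prod_{j<N}K((tq^2)^j)$ being ``generically nonzero'' at $\sigma_N$ is unsubstantiated; the paper leaves this point implicit as well, but it is not something you can wave away in a complete argument.
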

\noindent
\textbf{Remark.} \textit{i)} The number $pp_3^{(m)}$ of three-sided PPs of half perimeter $m$ is asymptotically equal to $\kappa \cdot \sigma^{-m} \cdot m^{-3/2}$ for some positive constant $\kappa.$ In particular, two-sided PPs are exponentially rare among three-sided PPs.\\ 
\textit{ii)} The generating function of three-sided prudent \emph{walks} has  its dominant singularity a simple pole at $\overline{\sigma}=0.403\ldots,$ as in the two-sided case. It is meromorphic in some larger disc of radius $\overline{\rho}=\sqrt{2}-1$ with infinitely many simple poles in the intervall $[\overline{\sigma},\overline{\rho}).$ Its coefficients grow like $\kappa \cdot\overline{\sigma}^{-m},$ for some $\kappa>0$ \cite{Bou2}. 

\smallskip
Possible singularities of $R(t,1,1)$ in $D_{\sigma,\rho}$ are zeroes of the denominators of $K(w)$ and $L(w),$ places, where the representation \eqref{gfR} diverges, and square root singularities of $B\left(t,q\left(q^2t \right)^N \right).$
Now we investigate the analytic properties of the single summands in the representation \eqref{gfR}.
\begin{lemma}\label{Lemma45}
\begin{enumerate}
\item $K\left(\left(q^2t\right)^N\right)$ and $L\left(\left(q^2t\right)^N\right)$ are analytic in $\{|t|<\sigma_N \}.$
\item $K\left(\left(q^2t\right)^Nw\right)$ and $L\left(\left(q^2t\right)^Nw\right)$ are analytic in $\{|t|<\rho \}\times \{|w|<\sqrt{\rho}\}.$
\end{enumerate}
\end{lemma}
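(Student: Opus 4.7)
The plan is to show that in each prescribed region the three possible obstructions to analyticity of $K$ and $L$---(i) the singularity of $q(t)$ at $\rho$, (ii) the singularity of the composition $B(t,qw)$ after the specified substitution in $w$, and (iii) a zero of the common denominator
\[
D(t,w) := 1 - t(1+t)q - \bigl(t(1-t-t^3)q + t^2\bigr)\bigl(B(t,qw)+t\bigr)w
\]
appearing in \eqref{K} and \eqref{L}---are absent; the numerators are polynomial in the same ingredients and hence inherit analyticity. For part 1, the substitution $w = (q^2t)^N$ places (ii) under the jurisdiction of Lemma \ref{Sigmas}, which pins the dominant singularity of $B(t, q(q^2t)^N)$ at $\sigma_N$, while (i) is harmless since $\sigma_N<\rho$. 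For part 2, the bound $|q^2t|\leq 1$ from Lemma \ref{Facts about Chuck Norris} gives $|(q^2t)^N w|\leq |w|<\sqrt{\rho}$ throughout the polydisc, so $(t,(q^2t)^N w)$ lies in the polydisc of analyticity of $B(t,qw)$ furnished by Lemma \ref{Sigmas}.

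The only remaining task is item (iii), and I would attack it through a positive-coefficients comparison. By Lemma \ref{Facts about Chuck Norris}, the series $t(1+t)q$ and $t(1-t-t^3)q + t^2$ have non-negative Taylor coefficients, and $B(t,qw)$ does so by construction. Consequently for complex $(t,w)$ with $|t|\leq t_0$ and $|w|\leq w_0$,
\[
|D(t,w)| \;\geq\; 1 - t_0(1+t_0)q(t_0) - \bigl(t_0(1-t_0-t_0^3)q(t_0)+t_0^2\bigr)\bigl(B(t_0,q(t_0)w_0)+t_0\bigr)w_0,
\]
so both assertions reduce to a single positivity check on the positive real axis: at $t_0=\sigma_N$ with $w_0=(q(\sigma_N)^2\sigma_N)^N$ for part 1, and at the limiting corner $(t_0,w_0) = (\rho,\sqrt{\rho})$ for part 2.

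The main obstacle will be verifying this quantitative inequality at the critical corner, where several ingredients reach their boundary values simultaneously. By Lemma \ref{Facts about Chuck Norris}, $t(1+t)q$ equals $1-\rho$ at $t=\rho$, and $q(\rho)\sqrt{\rho}=1$ forces the argument of $B$ to hit the boundary value $1$, where $B(\rho,1)$ is nonetheless finite thanks to \eqref{eqrho} annihilating the discriminant in \eqref{gfB}. Substituting these values and simplifying via the defining relation $1-3\rho-\rho^2-\rho^3=0$ reduces the estimate to an explicit algebraic inequality in $\rho$ and $\sqrt{\rho}$, which one checks directly. For part 1 the strict inequality $\sigma_N<\rho$ provides automatic slack in the first subtracted term, so the verification is easier once part 2 is in hand.
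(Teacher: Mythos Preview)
Your approach is essentially the paper's: write the common denominator as $1-T(t,w)$ with $T$ a double power series with non-negative coefficients, reduce non-vanishing to a positivity check on the positive real axis, and evaluate at the critical boundary point. The paper carries this out almost verbatim, including the evaluation at $(\rho,\sqrt{\rho})$ for part~2.

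One wrinkle in your sketch of part~1 deserves attention. You argue that $\sigma_N<\rho$ gives ``automatic slack in the first subtracted term,'' suggesting part~1 is dominated by part~2. But the relevant $w$-value for part~1 is $w_0=(q(\sigma_N)^2\sigma_N)^N=u(\sigma_N)/q(\sigma_N)$, and since $u$ is decreasing and $q$ increasing this is \emph{larger} than $u(\rho)/q(\rho)=\sqrt{\rho}$; so the second subtracted term is not obviously bounded by its value at $(\rho,\sqrt{\rho})$. The paper sidesteps this by observing that at the boundary $B(\sigma_N,q(\sigma_N)w_0)=B(\sigma_N,u(\sigma_N))=\sqrt{\sigma_N}$ exactly, and then checking that the single-variable function $1-T\bigl(t,u(t)/q(t)\bigr)$ is nonzero on the whole interval $[\sigma,\rho]$, which handles all $N$ at once. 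With that adjustment your argument is complete and matches the paper's.
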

\begin{proof}
With the above definition of $u(t)$ and a short computation we obtain the estimate 
\begin{equation}
\left| B\left(t,q\left(q^2t\right)^N\right)\right|<B(|t|,u(|t|))=\sqrt{|t|}.
\end{equation}
The denominator of $K(w)$ and $L(w)$ is
\[
1-T(t,w)=1-t \left( 1+t \right) q- \left(  t\left(1-{t} -{t}^{3} \right) q+{t}^{2} \right)  \left( B \left( t,qw \right) +t \right) w.
\]
$T(t,w)$ is a power series in $t$ and $w$ with non-negative coefficients and $T(0,w)=0.$ Hence we have the estimate
\[
T\left(t,\left(q^2t\right)^N\right)
\leq T\left(\sigma_N, \left(q\left(\sigma_N\right)^2\sigma_N\right)^N
\right)
\leq T\left(\sigma_N, \frac{u\left(\sigma_N\right)}{q\left(\sigma_N\right)}\right).
\]
A computation shows that the function $1-T\left(t,u(t)/q(t)\right)$ 
\[
1-T\left(t,\frac{u(t)}{q(t)}\right)=1-t \left( 1+t \right) q- \left(  t\left(1-{t} -{t}^{3} \right) q+{t}^{2} \right)  \left( \sqrt{t} +t \right) \frac{u(t)}{q(t)}
\]
has no zeroes in $[\sigma,\rho].$ This finishes the proof of the first assertion, as $K\left(\left(q^2t\right)^N\right)$ and $L\left(\left(q^2t\right)^N\right)$ do not have poles inside $\{|t|<\sigma_N\}.$ Furthermore, the denominator $1-T\left(t,\left(q^2t\right)^Nw\right)$  is analytic in the polydisc $\{|t|<\rho \} \times \{|w|<\sqrt{\rho}\},$ with the only singular point $(t,w)=(\rho,\sqrt{\rho})$ on its boundary. As above we see 
\[
\left|T\left(t,\left(q^2t\right)^Nw\right)\right|\leq T(|t|,|w|) \leq T\left(\rho,\frac{u(\rho)}{q(\rho)} \right) 
= T\left(\rho,\sqrt{\rho} \right),
\]
and hence the denominator is non-zero in the domain in question and $K\left(\left(q^2t\right)^Nw\right)$ and $L\left(\left(q^2t\right)^Nw\right)$ are both analytic in the polydisc.
\end{proof}
\begin{lemma}\label{convergence}
\begin{enumerate}
\item The series representation \eqref{gfR} of $R(t,1,1)$ is a series of algebraic functions, which converges compactly in the slit disc $D_{\sigma,\rho}=\{|t|<\rho\}\setminus [\sigma,\rho)$ to a meromorphic function. 
\item Furthermore the corresponding representation of $R(t,w,w)$ converges compactly in the polydisc $\{|t|<\rho \} \times \{|w|<\sqrt{\rho}\}$ to an analytic function. 
\item The Taylor expansion of $R(t,w,w)$ about $(t,w)=(0,0)$ converges absolutely in $\{|t|<\rho \} \times \{|w|<\sqrt{\rho}\}.$
\end{enumerate}
\end{lemma}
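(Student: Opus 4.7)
The plan is to prove (2) by uniform estimates on closed sub-polydiscs, deduce (3) as an immediate consequence, and adapt the same estimates to prove (1) on the slit disc. The common mechanism is that on any compact subset of the relevant domain $\sup|tq^2|<1$ (by Lemma \ref{Facts about Chuck Norris}) so that the iterates $(tq^2)^k$ decay geometrically, together with the Taylor expansions $L(v)=O(v)$ and $K(v)=K(0)+O(v)$ at $v=0$ read off from \eqref{L} and \eqref{K}. Each summand is a rational expression in $t$, $q$ and $B$, hence algebraic in $t$, which gives the ``series of algebraic functions'' clause of (1) for free.

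For (2), fix a closed polydisc $P_{r,s}=\{|t|\le r\}\times\{|w|\le s\}$ with $r<\rho$ and $s<\sqrt{\rho}$. By Lemma \ref{Lemma45}(2) every factor is analytic on $P_{r,s}$. Set $\eta:=\sup_{|t|\le r}|tq^2|<1$. The Taylor bounds yield constants with $|L((tq^2)^k w)|\le C_L\eta^k s$ and $|K((tq^2)^j w)|\le|K(0)|+C_K\eta^j s$. Using that $(1-t)q-1$ and $t(1+t)q$ have non-negative coefficients together with Lemma \ref{Facts about Chuck Norris}, the ratio $|K(0)|=|(1-t)q-1|/|1-t(1+t)q|$ is uniformly bounded on $|t|\le r$ by some $\alpha<1$ (strict since $r<\rho$). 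Hence
\[
\prod_{j=0}^{k-1}\bigl|K((tq^2)^jw)\bigr|\le\alpha^k\prod_{j\ge 0}\!\left(1+\frac{C_K s}{\alpha}\eta^j\right)=:\alpha^kM<\infty,
\]
and the $k$-th summand is bounded by $C_L s M(\alpha\eta)^k$. The resulting geometric series converges uniformly on $P_{r,s}$, and Weierstrass's theorem gives (2). Since this provides analyticity on the open polydisc $\{|t|<\rho\}\times\{|w|<\sqrt{\rho}\}$ centred at the origin, the Taylor expansion at $(0,0)$ converges absolutely on the whole polydisc, yielding (3).

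For (1), the same estimates apply at $w=1$ for all sufficiently large $k$; the new issue is $1>\sqrt{\rho}$, so Lemma \ref{Lemma45}(2) does not cover the few initial summands. On a compact $E\subset D_{\sigma,\rho}$ set $\eta:=\sup_{t\in E}|tq^2|<1$ and pick $N_0$ with $\eta^{N_0}<\sqrt{\rho}$; then for $k\ge N_0$ the polydisc estimates control the tail. The main obstacle is to show that the finite head $\sum_{k<N_0}L((tq^2)^k)\prod_{j<k}K((tq^2)^j)$ is itself meromorphic on $D_{\sigma,\rho}$. Potential poles of $K((tq^2)^N)$ and $L((tq^2)^N)$ come from zeros of the common denominator $1-T(t,(tq^2)^N)$, and potential branch cuts from the radical in \eqref{gfB} inside $B(t,q(tq^2)^N)$. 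Both are localized by extending the positive-coefficient argument of Lemma \ref{Lemma45}(1) off the real axis: the majorization $|T(t,v)|\le T(|t|,|v|)$ forces the denominator to vanish only where its real counterpart does, namely on $[\sigma_N,\rho)$, while the discriminant of $B$ singles out the same real interval as a branch cut. Since $[\sigma_N,\rho)\subset[\sigma,\rho)$ is precisely the slit removed in $D_{\sigma,\rho}$, the head is meromorphic (indeed analytic) there. Combining head and tail proves compact convergence to a meromorphic function and establishes (1).
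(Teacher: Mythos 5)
Your treatment of (2) and (3) is correct and essentially the paper's approach: the paper likewise bounds the tail by a geometric series (controlling the $w$-free part of $K$ via the inequalities of Lemma~\ref{Facts about Chuck Norris}) and then invokes Montel's theorem (you use Weierstrass, which is the same in substance). Your deduction of (3) from (2) is also fine.

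The gap is in your argument for (1). To prove meromorphy of the finite head you need to place the \emph{branch points} of each head summand (coming from the square root in $B\bigl(t,q(tq^2)^j\bigr)$ and from $q$ itself) on the slit $[\sigma,\rho)$; poles from zeros of $1-T$ are harmless for meromorphy, so half of your argument addresses a non-issue. More seriously, the half that does matter is not a proof: the inequality $|T(t,v)|\le T(|t|,|v|)$ is a one-sided majorization and cannot ``force the denominator to vanish only where its real counterpart does.'' It only shows that no zero can occur when $T(|t|,|v|)<1$; it says nothing about where zeros \emph{do} occur, and the same applies verbatim to your assertion that ``the discriminant of $B$ singles out the same real interval as a branch cut,'' which is simply stated without justification. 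To localize the branch points of $B\bigl(t,q(tq^2)^j\bigr)$ inside $\{|t|<\rho\}$ to the slit one has to analyse the complex zeros of the discriminant $t^2(1-t)^2u^2-2t(1-t^2)u+(1-t)^2$ with $u=q(tq^2)^j$, as opposed to Lemma~\ref{Sigmas}, which only pins down the unique real zero $\sigma_j$ and the dominant singularity via Pringsheim. The paper's proof avoids this by choosing $N$ with $\sigma_N>r$ (so that by Lemma~\ref{Lemma45}(1) \emph{all} tail factors are analytic on $|t|\le r$) and then only \emph{asserting} that the finite head, being a finite sum of algebraic functions whose real singularities $\sigma_0,\dots,\sigma_{N-1}$ lie on $[\sigma,r]$, is meromorphic in the slit disc; it stays at the level of ``meromorphic,'' not ``analytic,'' and does not attempt the incorrect majorization argument you give. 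Either drop the majorization reasoning and argue as the paper does (finite sum of algebraic functions, Pringsheim for the dominant singularity, slit removes the known real branch points), or give a genuine argument that the discriminants have no complex zeros inside the disc.
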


\begin{proof}
For the first assertion choose $0<r<\rho.$ We look at the disc $\{|t|\leq r\}.$ The term independent of $w$ in the numerator of $K(w)$ is strictly less than $\rho$ for $|t|\leq r$ and the corresponding term in the denominator is strictly larger than $\rho, $ see Lemma \ref{Facts about Chuck Norris}. So we can choose $N$ large such that $\sigma_N>r$ and $\left|K\left(\left(q^2t\right)^N\right)\right|<1$ for $|t|\leq r.$ Split the series at $N.$ The summands for $k=0,\ldots,N-1$ sum up to a function which is meromorphic in the slit disc $\left\{|t|\leq r \right\}\setminus [\sigma,r].$ In the rest of the series take out the common factors to obtain 
\begin{equation}\label{sumgen}
\prod_{j=0}^{N-1}K\left(\left(tq^2  \right)^{j}\right) \sum_{k\ge 0}L\left(\left(tq^2  \right)^{N+k}\right)\prod_{j=0}^{k-1}K\left(\left(tq^2  \right)^{N+j}\right).
\end{equation}
 The first product is a meromorphic function in the slit disc. $L\left(\left(tq^2  \right)^{N+k}\right)$ is easily seen to converge uniformly to $0$ in $|t|\leq r$ as $k\rightarrow \infty.$  In $|t|\leq r$ all summands are holomorphic (see the above discussion) and the sum can be estimated by a geometric series and hence converges uniformly in the compact disc  $\{|t|\leq r\}.$ By Montel's theorem the limit of the sum is again analytic. This finishes the proof for the first assertion. The second assertion is proven along the similar lines. By the multivariate version of Montel's theorem \cite{Sch} the limit function is also analytic in the domain in question and thus the third assertion follows. 
 \end{proof}

\begin{lemma}\label{Lemma47}
$R(t,1,1)$ is singular at infinitely many of the $\sigma_N.$ Furthermore, $R(t,1,1)$ is singular at $\sigma.$
\end{lemma}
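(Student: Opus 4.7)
Setup. Let $R_N(t) := R(t,(q^2t)^N,(q^2t)^N)$, so that $R_0 = R(t,1,1)$, and re-iterate \eqref{iter} from level $N$ to obtain
\[
R_N(t) = K\!\left((q^2t)^N\right) R_{N+1}(t) + L\!\left((q^2t)^N\right).
\]
By Lemmas \ref{Lemma45} and \ref{convergence}, $R_N$ is analytic in $\{|t|<\sigma_N\}$; in particular $R_{N+1}$ is analytic at $\sigma_N$ because $\sigma_{N+1}>\sigma_N$. Splitting
\[
R_0(t) = \sum_{k<N} L\!\left((q^2t)^k\right) \prod_{j<k} K\!\left((q^2t)^j\right) + R_N(t) \prod_{j<N} K\!\left((q^2t)^j\right),
\]
the partial sum and the finite product are both analytic at $\sigma_N$ by Lemma \ref{Lemma45}, so the behaviour of $R_0$ at $\sigma_N$ is controlled by that of $R_N$ there.

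Locating the singularities. Both $L((q^2t)^N)$ and $K((q^2t)^N)$ pick up a square root singularity at $\sigma_N$ through $B(t,q(q^2t)^N)$ (Lemma \ref{Sigmas}), while the common denominator $1-T(t,(q^2t)^N)$ is non-vanishing there by the estimates in the proof of Lemma \ref{Lemma45}. Writing $B(t,q(q^2t)^N)+t = B^*_N(t) + b_N(t)\sqrt{\sigma_N-t}$ with $b_N(\sigma_N)\neq 0$ and substituting into \eqref{K} and \eqref{L} yields
\[
L\!\left((q^2t)^N\right) = \ell_N(t) + a_L^{(N)}\sqrt{\sigma_N-t} + O(\sigma_N-t),\quad K\!\left((q^2t)^N\right) = k_N(t) + a_K^{(N)}\sqrt{\sigma_N-t} + O(\sigma_N-t),
\]
with $\ell_N,k_N$ analytic near $\sigma_N$ and $a_L^{(N)},a_K^{(N)}$ explicit rational expressions in $\sigma_N$ and $q(\sigma_N)$ with $a_K^{(N)}\neq 0$. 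Combining this with analyticity of $R_{N+1}$ at $\sigma_N$, the leading singular part of $R_0$ at $\sigma_N$ equals $\bigl(a_L^{(N)}+a_K^{(N)} R_{N+1}(\sigma_N)\bigr)\sqrt{\sigma_N-t}$ multiplied by the non-zero analytic factor $\prod_{j<N} K((q^2\sigma_N)^j)$. Hence $R_0$ has a genuine square root singularity at $\sigma_N$ whenever
\[
R_{N+1}(\sigma_N) \neq -\,a_L^{(N)}/a_K^{(N)}.
\]

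Verification and main obstacle. For $N=0$, the forced value $-a_L^{(0)}/a_K^{(0)}$ is computable in closed form from \eqref{K} and \eqref{L} using $q(\sigma)=u(\sigma)=1/\sqrt\sigma$ together with $\sigma^5+2\sigma^2+3\sigma-2=0$; on the other hand $R_1(\sigma) = R(\sigma,q(\sigma)^2\sigma,q(\sigma)^2\sigma)$ is a concrete non-negative real number (since $R(t,u,w)$ has non-negative Taylor coefficients), which one can bracket sharply by truncating the series \eqref{gfR} from $k=1$ and estimating the tail by the geometric bound from the proof of Lemma \ref{convergence}. This bracket excludes the target, yielding the singularity of $R_0$ at $\sigma$. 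For singularity at infinitely many $\sigma_N$, the same criterion applies; here the asymptotics $\sigma_N\to\rho$ and $q(\sigma_N)^2\sigma_N\to 1$ allow one to track $-a_L^{(N)}/a_K^{(N)}$ via \eqref{K}, \eqref{L} and $R_{N+1}(\sigma_N)$ via the dominant term of its series, and to show that the two cannot coincide for all but finitely many $N$. The main obstacle in both cases is precisely this non-cancellation check: the formal expansions are mechanical, but excluding the algebraic coincidence $R_{N+1}(\sigma_N) = -a_L^{(N)}/a_K^{(N)}$ requires either sharp numerical verification at $N=0$ or a uniform analytic estimate as $N\to\infty$.
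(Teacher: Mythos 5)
Your setup matches the paper's: split the series at level $N$, observe the first $N$ summands and the finite product $\prod_{j<N}K((q^2t)^j)$ are analytic at $\sigma_N$, and reduce the question to whether the square-root singularity coming from $B(t,q(q^2t)^N)$ in $L((q^2t)^N)+K((q^2t)^N)R_{N+1}(t)$ can cancel. Your non-cancellation criterion $R_{N+1}(\sigma_N)\neq -a_L^{(N)}/a_K^{(N)}$ is exactly the paper's equation \eqref{cancel}, with $-P_L/P_K$ in place of $-a_L^{(N)}/a_K^{(N)}$.

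However, you stop short of proving the criterion holds: you explicitly flag the non-cancellation check as ``the main obstacle'' and sketch only that it ``requires either sharp numerical verification at $N=0$ or a uniform analytic estimate as $N\to\infty$,'' without supplying either. That is precisely the content of the lemma, so the proposal as written is incomplete. The paper closes this gap with a monotonicity argument: it computes the cancellation threshold in closed form,
\[
-\frac{P_L}{P_K}=\frac{1-\sigma_N q(\sigma_N)}{q(\sigma_N)-1},
\]
shows this sequence is ultimately strictly decreasing in $N$, and shows the other side $R\big(\sigma_N,(\sigma_Nq(\sigma_N)^2)^{N+1},(\sigma_Nq(\sigma_N)^2)^{N+1}\big)$ is ultimately strictly increasing --- using that $R(t,w,w)$ has non-negative Taylor coefficients (Lemma~\ref{convergence}), that $\sigma_N\uparrow\rho$ (Lemma~\ref{Sigmas}), and the identity $(\sigma_Nq(\sigma_N)^2)^{N+1}=u(\sigma_N)q(\sigma_N)\sigma_N$, which tends increasingly to $\sqrt{\rho}$. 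Two sequences eventually strictly monotone in opposite directions can agree at most once, so cancellation can occur for at most finitely many $N$. You would need to supply this (or an equivalent) argument; merely exhibiting the criterion does not establish the lemma. A secondary point: the paper's own proof establishes the ``infinitely many $\sigma_N$'' claim via this tail monotonicity; the specific singularity at $\sigma=\sigma_0$ is a finite check, and your suggestion of bracketing $R_1(\sigma)$ numerically against the closed-form threshold is a reasonable way to do it, but again you do not carry it out.
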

\begin{proof}
Terms singular at $\sigma_N$ only show up in the summands for $k\ge N.$ The sum of these \eqref{sumgen} is equal to
\begin{equation}
\prod_{j=0}^{N-1}K\left(\left(tq^2  \right)^{j}\right) 
\left[ 
L\left(\left(tq^2  \right)^{N}\right)  
+K\left(\left(tq^2  \right)^{N}\right) R\left(t,\left(tq^2  \right)^{N+1},\left(tq^2  \right)^{N+1}\right)  
 \right].
\end{equation}
In order to show that the singularity $\sigma_N$ does not cancel, only the term in square brackets is of interest. Singular terms show up in the numerators and the common denominator of $K\left(\left(tq^2  \right)^{N}\right) $ and $L\left(\left(tq^2  \right)^{N}\right).$ We now manipulate the expressions \eqref{Kalt} and \eqref{Lalt} for $K(w)$ and $L(w)$ in order to get rid of singular terms in the denominator, where the factor 
\[
(1-q)qwt\left(B(t,qw)+t\right) +t
\]
leads to a singularity at $\sigma_N$ for $w=\left(tq^2  \right)^{N}.$ Write
\[
qwt\left(B(t,qw)+t\right)=A(w)-\phi(w),
\]
where
\[
A(w)=\frac{1}{2}\left( 1+t-qw(1+t)t \right)
\]
\[
\phi(w)=\frac{1}{2}\sqrt{t^2(1-t)^2(qw)^2-2t\left(1-t^2\right)qw+(1-t)^2}.
\]
Then $A\left(\left(tq^2  \right)^{N}\right)$ is analytic in $\{|t|<\rho\}.$ After multiplication of the numerator and denominator with $(1-q)A(w)+t\;+\; (1-q)\phi(w)$ there is no more occurence of $\phi$ in the denominator. We now have to collect the terms involving $\phi(w)$ in the numerators of $K(w)$ and $L(w).$ In the numerator of $K(w)$ the terms involving $\phi(w)$ sum up to 
\[
P_K(w)\phi(w):=t(q-1)^2(1-q^2t)\phi(w).
\]
The terms involving $\phi(w)$ in the numerator of $L(w)$ sum up to
\[
P_L(w)\phi(w):=(1-qt)(1-q^2t)(1-q)t\phi(w).
\]
So the singularity at $\sigma_N$ can only cancel if
\begin{equation}\label{cancel}
-\frac{P_L\left(\left(\sigma_Nq(\sigma_N)^2  \right)^{N}\right)}  
{P_K\left(\left(\sigma_Nq(\sigma_N)^2  \right)^{N}\right)}=R\left(\sigma_N,\left(\sigma_Nq(\sigma_N)^2  \right)^{N+1},\left(\sigma_Nq(\sigma_N)^2  \right)^{N+1}\right).  
\end{equation}
In order to prove that this equation can hold for at most finitely many of the $\sigma_N,$ we show that for $\sigma_N$ sufficiently close to $\rho$ the lhs of eq. \eqref{cancel} is strictly decreasing while the rhs is strictly increasing. Since $\left(\sigma_N\right)$ is monotonically increasing and converges to $\rho$ this will finish the proof. 
We first prove the assertion on the rhs. The Taylor expansion of $R(t,w,w)$ about $(0,0)$ has non-negative coefficients and represents $R(t,w,w)$ in the polydisc $\{|t|<\rho \} \times \{|w|<\sqrt{\rho}\}$ by Lemma \ref{convergence}. 
By the definition of $\sigma_N$ and $u(t)$ we have 
\[
\left(\sigma_N q\left(\sigma_N\right)^2  \right)^{N+1}=u\left(\sigma_N\right)q
\left(\sigma_N\right)\sigma_N
\]
The rhs of the last equation is strictly increasing for sufficiently large $N$ and converges to $\sqrt{\rho}$ as $N\rightarrow \infty.$ The sequence $\sigma_N$ is also strictly increasing by Lemma \ref{Sigmas}. So for large enough $N$ the sequence $R\left(\sigma_N,\left(\sigma_N q\left(\sigma_N\right)^2  \right)^{N+1},\left(\sigma_N q\left(\sigma_N\right)^2  \right)^{N+1}\right)$ is strictly increasing.\\
Now we turn to the lhs of eq. \eqref{cancel}.  A computation yields 
\[
-\frac{P_L\left(\left(\sigma_Nq(\sigma_N)^2  \right)^{N}\right)}  
{P_K\left(\left(\sigma_Nq(\sigma_N)^2  \right)^{N}\right)}=\frac{1-\sigma_Nq(\sigma_N)}{q(\sigma_N)-1},
\]
which easily seen to be ultimately strictly decreasing. This finishes the proof of Lemma \ref{Lemma47}.
\end{proof}
\noindent
The Lemmas \ref{Lemma45}, \ref{convergence} and \ref{Lemma47} together constitute a proof of Theorem \ref{singstr}.
\section{Random three-sided prudent polygons}\label{Random}

In \cite{Bou2} prudent walks of a given fixed length are generated uniformly at random with a refined version of a method proposed in \cite{Wilf}. We briefly describe a version of the method tailored to our particular needs. The main ingredient in the sampling procedure are \emph{generating trees}. These are rooted trees with their nodes labelled in such a way that if two nodes bear the same label, then the multisets of the labels of their children are the same. In this section we present generating trees and sampling procedures for the various classes of prudent polygons.

\smallskip
The decompositions underlying the functional equations \eqref{feqB}, \eqref{feqR} and \eqref{feq4sd} (see also Figures \ref{feq2sd}, \ref{3sdfeq} and \ref{4sd}) yield rules according to which a larger PP from the respective class can be constructed starting from a smaller one. We refine these building steps such that each step increases the half-perimeter by one. The result is a step-by-step procedure which allows to generate any PP of half-perimeter $m$ in a \emph{unique} way, starting from the unit square, such that after the $k$th step we have a PP of half-perimeter $k+2,$ $k=0,1,\ldots m-2.$ To put it differently, we can organise the polygons in a rooted tree $\mathcal T,$ with the unit square as the root and the polygons of half-perimeter $m$ as the nodes on level $m-2.$ So a random PP of half-perimeter $m$ corresponds to a random path of length $m-2$ in that tree starting from the root.
A polygon $\alpha$ is a \emph{child} of a polygon $\pi,$ if it is obtained by one of the following six construction steps.
\begin{enumerate}
\item Attaching a new top row which is shorter than or equal to the current top row,
\item attaching a unit square to the left side of the current top row,
\item attaching a new leftmost column which is shorter than or equal to the current leftmost column,
\item attaching a unit square to the bottom side of the leftmost column,   
\item attaching a new bottom-most row which is shorter than or equal to the current bottom-most row, and
\item attaching a unit square to the right side of the bottom-most row.   

\end{enumerate}
Any of these steps, if applicable, increases the half-perimeter by one, see Figure \ref{randsteps} below. \\

\begin{figure}[htb]
\begin{center}
\includegraphics[height=10mm,width=115mm]{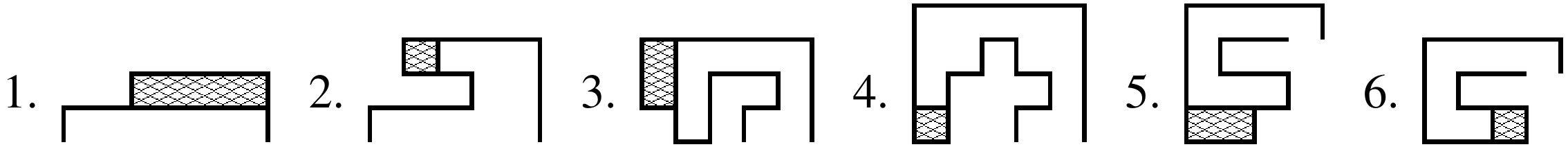}
\caption{\small The types of steps used to obtain generating trees}\label{randsteps}
\end{center}
\end{figure}
\noindent
\textbf{Remark.} \textit{i)} Steps of types 2,4 and 6 are only admissible if the current top row (leftmost column, bottom row) is longer than or equal to the second row from top (second column from the left, second row from the bottom). Additionally, a step of type 6 is forbidden, if the bottom row is only one unit shorter than the width of the box.\\
\textit{ii)} In the proof of the functional equation \eqref{feqR} the steps of types 3 and 4 are encapsulated in the ``attaching a bar graph to the left" operation. Hence any generic three-sided PP can be generated starting from the unit square by using only steps of the first four types.\\
\textit{iii)} Building bar graphs only requires steps of type 1 and 2. Here we reflected the bar graphs discussed earlier in the line $x=y.$

In order to compute the appropriate probabilities according to which each step in the random path in the tree is chosen, we associate to each PP a label encoding the admissible steps which can be applied to enlarge it. We give a labelling for unrestricted PPs, since the labellings of the other classes are obtained as specialisations thereof. The labels are five-tuples $(a,e,k,l,p)\in\{B,L,T\}\times \{\textnormal{y,n}\}\times\mathbb{Z}_{\ge 0}^3.$ The letter $a$ encodes the last building step. It is equal to $T$ (top), if the last step was of type 1 (which inflated the box to the top), or of type 2 but \emph{without} inflating the box  to the left. $a$ is equal to $L$ (left) if the last step was of type 2 \emph{and} thereby inflating the box to the left, of type 3 or of type 4 but \emph{without} inflating the box. Finally, $a$ is equal to $B$ (bottom) if the last building step was of type 4 \emph{with} an inflation of the box, or of types 5 or 6.

If $a=T,$ the parameter $e\in \{\textnormal{y,n}\}$ (yes/no) indicates if the current top row is longer than or equal to the second row from the top, and hence if a step of type 2 is applicable. Similarly, if $a=L,$ $e$ decides if a step of type 4 can be performed, i.e. if the leftmost column is shorter than or equal to the second but leftmost one. Finally, if $a=B,$ $e$ decides whether a step of type 6 can be performed. 

The parameter $k$ always denotes the length of the top row, and $l$ is either the distance of the left end of the top row to the left side of the box or the length of the leftmost column or the length of the bottom row, depending on whether $a=T$ or $a=L$ or $a=B$ respectively. The unit square receives the label $(L,\textnormal{n},1,1,0).$
\begin{center}
\begin{tabular}{|c|c|c|c|c|}\hline
\multicolumn{5}{|c|}{Labels for the generating tree of PPs}\\ \hline\hline
{$a$} & {$e$} & {$k$} & {$l$} & {$p$}\\ \hline\hline
\small{$T$} & \small{top row extendable?} & \small{top row} & \small{dist. of box to top row} & \small{height} \\ \hline
\small{$L$} & \small{left col. extendable?} & \small{top row} & \small{length of leftmost col.} & \small dist. of box to left col. \\ \hline
\small{$B$} & \small{bottom row extendable?} &\small{top row} & \small{length of bottom row} & \small{height} \\\hline
\end{tabular}
\end{center}
\noindent
\textbf{Remark.} In the proof of equation \eqref{feq4sd} we introduced two subclasses $\mathcal G$ and $\mathcal H.$ The polygons with $a=L$ (resp. $a=B$) are precisely those in $\mathcal G$ (resp. $\mathcal H$).

\smallskip 
The construction steps yield the following rewriting rules for the labels associated with general PPs. 
\begin{equation}\label{Tn}
(T,\textnormal{n},k,l,p)\to \begin{cases} (T,\textnormal{n},i,l+k-i,p+1),& i=1,\ldots, k-1\\  (T,\textnormal{y},k,l,p+1)& \end{cases}  
\end{equation}
%
%
\begin{equation}\label{Ty}
(T,\textnormal{y},k,l,p)\to \begin{cases} (T,\textnormal{n},i,l+k-i,p+1),& i=1,\ldots, k-1\\  (T,\textnormal{y},k,l,p+1)\\  (T,\textnormal{y},k+1,l-1,p),&\textnormal{if }l\ge 1\\  (L,\textnormal{n},k+1,1,p-1),&\textnormal{if }l=0   \end{cases}  
\end{equation}
%
%
\begin{equation}\label{Ln}
(L,\textnormal{n},k,l,p)\to \begin{cases} (T,\textnormal{n},i,k-i,l+p+1),& i=1,\ldots, k-1\\  (T,\textnormal{y},k,0,l+p+1)\\  (L,\textnormal{n},k+1,i,l+p-i),&\,i=1,\ldots, l-1\\  (L,\textnormal{y},k+1,l,p)& \end{cases}  
\end{equation}
\begin{equation}\label{Ly}
(L,\textnormal{y},k,l,p)\to \begin{cases} (T,\textnormal{n},i,k-i,l+p+1),& i=1,\ldots, k-1\\  (T,\textnormal{y},k,0,l+p+1)\\  (L,\textnormal{n},k+1,i,l+p-i),& i=1,\ldots, l-1\\  (L,\textnormal{y},k+1,l,p)&\\(L,\textnormal{y},k,l+1,p-1)&\textnormal{if }p\ge 1\\ (B,\textnormal{n},k,1,l+1)&\textnormal{if }p=0 \end{cases}  
\end{equation}
\begin{equation*}
(B,\textnormal{n},k,l,p)\to \begin{cases}   (T,\textnormal{n},i,k-i,p+1),& i=1,\ldots, k-1\\  (T,\textnormal{y},k,0,p+1)\\  (L,\textnormal{n},k+1,i,p-i),& i=1,\ldots, p-1\\  (L,\textnormal{y},k+1,l,0)\\ 
(B,\textnormal{n},k,i,p+1), & i=1,\ldots,l-1 \\ (B,\textnormal{y},k,l,p+1)  &\textnormal{if } k-1>l\\   (B,\textnormal{n},k,l,p+1)  &\textnormal{if } k-1=l         \end{cases}
\end{equation*}
\begin{equation*}
(B,\textnormal{y},k,l,p)\to \begin{cases}   (T,\textnormal{n},i,k-i,p+1),& i=1,\ldots, k-1\\  (T,\textnormal{y},k,0,p+1)\\  (L,\textnormal{n},k+1,i,p-i),& i=1,\ldots, p-1\\  (L,\textnormal{y},k+1,l,0)\\ 
(B,\textnormal{n},k,i,p+1), & i=1,\ldots,l-1 \\ (B,\textnormal{y},k,l,p+1)  &\\  (B,\textnormal{n},k,l+1,p)  &\textnormal{if } k-1=l \\ (B,\textnormal{y},k,l+1,p)  &\textnormal{if } k-1>l\\         \end{cases}
\end{equation*}

\noindent
The labelled rooted tree generated according to these rewriting rules with its root labelled $(L,\textnormal{n},1,1,0)$ is a generating tree for unrestricted prudent polygons (more precisely, for the class $\mathcal{F},$ cf. Section \ref{Functionalequations}). This tree is of course isomorphic to the tree $\mathcal T$ defined above having PPs as nodes, simply by replacing each PP by its label.

\smallskip 
As mentioned above, choosing a PP of half-perimeter $m$ uniformly at random is equivalent to choosing an $m-2$-step path starting from the root uniformly at random. This is achieved by picking each step in the path according to an appropriate probability which in turn can be expressed in terms of \emph{extension numbers}. If $\pi$ is a polygon of half-perimeter $m-s$ (a path of length $m-s-2$), then $EX(\pi,s)$ denotes the number of polygons of half-perimeter $m$ which can be reached from $\pi$ in $s$ construction steps, or equivalently of extensions of length $s$ of the path. Denote by $Ch(\pi)$ the set of polygons obtained from $\pi$ in one step, i.e. the children of $\pi$ in the generating tree. Now the right probability to choose $\alpha\in Ch(\pi)$ in the course of our random sampling procedure is equal to
\[
\mathbb{P}(\alpha|\pi)=\frac{EX(\alpha,s-1)}{EX(\pi,s)}.
\]   
The numbers $EX(\pi,s)$ can be computed recursively, namely
\[
EX(\pi,s)=\begin{cases} 1&\textnormal{if }s=0,\\ \sum_{\alpha\in Ch(\pi)}EX(\alpha,s-1)&\textnormal{otherwise}. \end{cases}
\] 
The crucial observation is that $EX(\pi,\cdot)$ \emph{only depends on the label of }$\pi,$ which allows an efficient computation. 

\smallskip
For unrestricted PPs, in the first $m-2$ levels of the tree $O(m^3)$ different labels occur since none of the parameters exceeds $m.$ It hence takes $O(m^4)$ operations to compute the all required extension numbers. We have implemented the procedure and computed these numbers up to $m=80.$ See Figure \ref{random4sd} at the end of the paper for some samples.

\medskip
\noindent
\textbf{Modifications for two-sided PPs.} As remarked above, generating two-sided PPs only requires steps of types 1 and 2. The only required information for the building procedure is the length of the top row and if the top row is extendable. We hence only need labels $(T,\textnormal{n},k)$ and $(T,\textnormal{y},k)$ obtained from the labels $(T,\cdot,k,l,p)$ above by leaving the parameters $l$ and $p$ unconsidered. The rewriting rule \eqref{Tn} can be adapted unchanged (up to deleting the last two coordinates) and in \eqref{Ty} simply omit the last line (and the ``if $l\ge1$"-clause in the second but last line). The unit square receives the label $(T,\textnormal{y},1).$ There are $O(m)$ different labels in the first $m-2$ levels of the generating tree, and hence $O(m^2)$ extension numbers have to be computed. See Figure \ref{random2sd} for some samples of half-perimeter $250.$

\medskip
\noindent
\textbf{Modifications for three-sided PPs.} For the generation of three-sided PPs steps 1, 2, 3 and 4 suffice. For an appropriate labelling we can hence dump down the $(B,\cdot,k,l,p)$ labels and use labels $(T,\textnormal{n},k,l),$ $(T,\textnormal{y},k,l),$ $(L,\textnormal{n},k,l)$ and $(L,\textnormal{y},k,l)$  obtained from the labels $(T,\cdot,k,l,p)$ and $(L,\cdot,k,l,p)$ by simply discarding the parameter $p.$ The rewriting rules \eqref{Tn}, \eqref{Ty} and \eqref{Ln} are adapted without change. In the rule \eqref{Ly} drop the last line. The unit square is labelled with $(L,\textnormal{n},1,1).$ We have $O(m^2)$ different labels on the first $m-2$ levels of the tree and hence $O(m^3)$ extension numbers have to be computed. See Figure \ref{random3sd} for some samples of half-perimeter $250.$
\section{Conclusion}\label{Conclusion}
We have solved the class of two-sided and three-sided prudent polygons, the generating function being algebraic in the former and non-D-finite in the latter case.
The analysis shows that two-sided PPs are exponentially rare among three-sided PPs which is different from the corresponding walk models where the growth rates are equal.\\
It would be nice to solve the class of general prudent polygons. We expect that the involved functional equations require three or more catalytic variables, which is the case for the equation found for the walk model.\\ 
Since the exponential growth rates of SAWs and SAPs are known to be equal \cite{Ham61} it is also interesting to compare the exponential growth rates of $k$-sided PWs and PPs. To that end it suffices to study PPs ending in $(1,0).$ As already mentioned in the introduction, a $k$-sided PP ending in $(1,0)$ may never step right of the line $x=1$ and it heads towards the vertex $(1,0)$ as soon as it hits that line for the first time in a point $(1,y_0).$ Up to that step the boundary walk of that $k$-sided PP is genuinely $k-1$-sided. This yields an injective map sending a $k$-sided PP to a $k-1$-sided PW simply by reflecting the segment joining $(1,y_0)$ and $(1,0)$ in the line $y=y_0,$ see Figure \ref{ktokminus1}.
\begin{figure}[htb]
\begin{center}
\includegraphics[height=26mm,width=60mm]{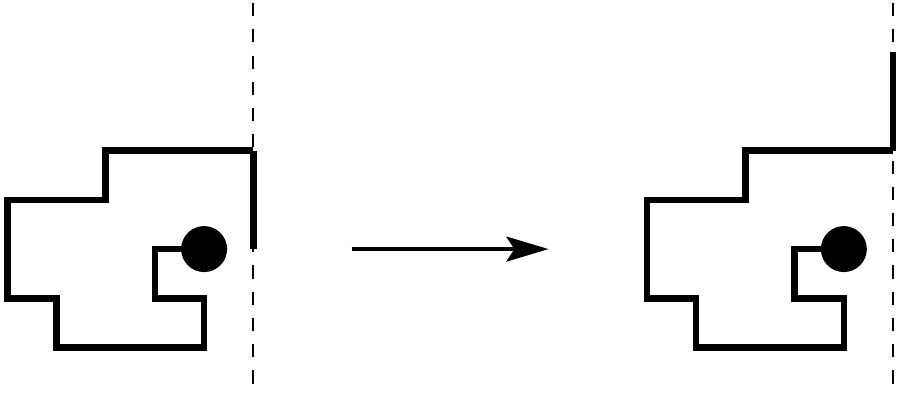}
\caption{\small Embedding of $k$-sided PPs into $k-1$-sided PWs}\label{ktokminus1}
\end{center}
\end{figure}
\noindent
We denote the so-obtained subclass of $k-1$-sided PWs by ``embedded $k$-sided PPs". If we count PPs by full perimeter, their exponential growth rates become $1/\sqrt{\rho}=1.83\ldots$ for two-sided PPs and $1/\sqrt{\sigma}=2.02\ldots$ for three-sided PPs. It is known that the exponential growth rate of PWs is equal to $1+\sqrt{2}=2.41\ldots$ in the one-sided case and equal to $2.48\ldots$ in the two- and three-sided cases \cite{Bou2}. The latter rate is also expected for unrestricted PWs \cite{DGJ,GGJD09}. Consequently, for $k=2,3,$ our results imply that $k$-sided PPs are exponentially rare among $k$-sided PWs \emph{and}, via embedding, among $k-1$-sided PWs. Furthermore, the rate of three-sided PPs is even smaller than that of one-sided PWs. This is not surprising looking at the pictures in Figure \ref{random3sd}, as such a PP roughly consists of two ``almost" one-sided PWs, one heading to the far left followed by one ``almost directed" walk up and to the right (and the closing tail). We expect that this heuristic argument also applies in the general case, which is also supported by an estimated value of approximately $2.1<1+\sqrt{2}$ for the growth rate of general PPs \cite{DGJ,GGJD09}.
\section*{Acknowledgements}
Part of this work was done during a stay of the author at the University of Melbourne. He would like to thank MASCOS and the members of the Statistical Mechanics and Combinatorics group for their kind hospitality. In particular he would like to thank Tony Guttmann for introducing the problem to him. He thanks Christoph Richard for helpful comments on the manuscript. He would also like to acknowledge financial support by the German Research Council (DFG) within CRC 701. 
\begin{figure}[!h]
\begin{center}
\includegraphics[height=70mm,width=76mm]{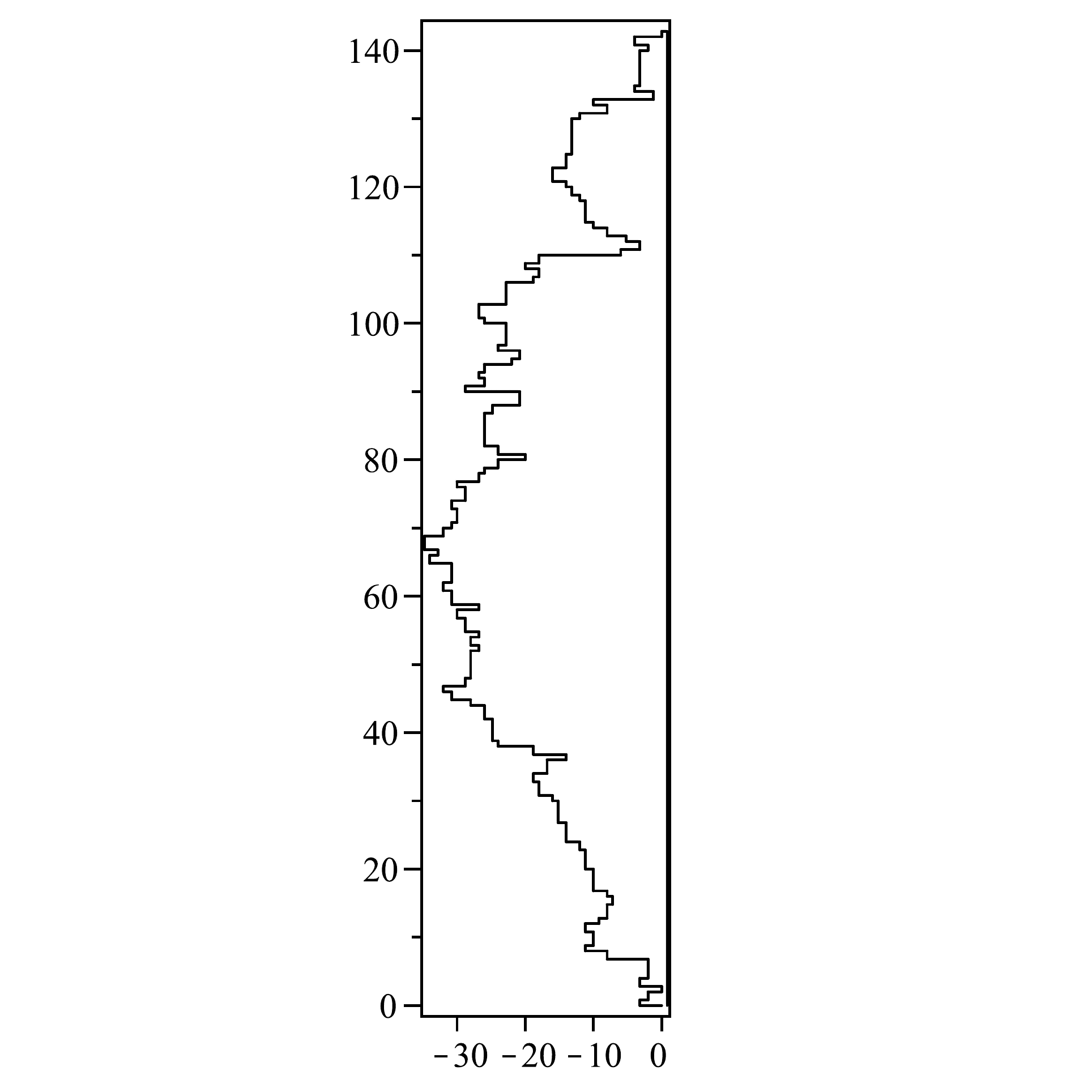}
\includegraphics[height=70mm,width=82mm]{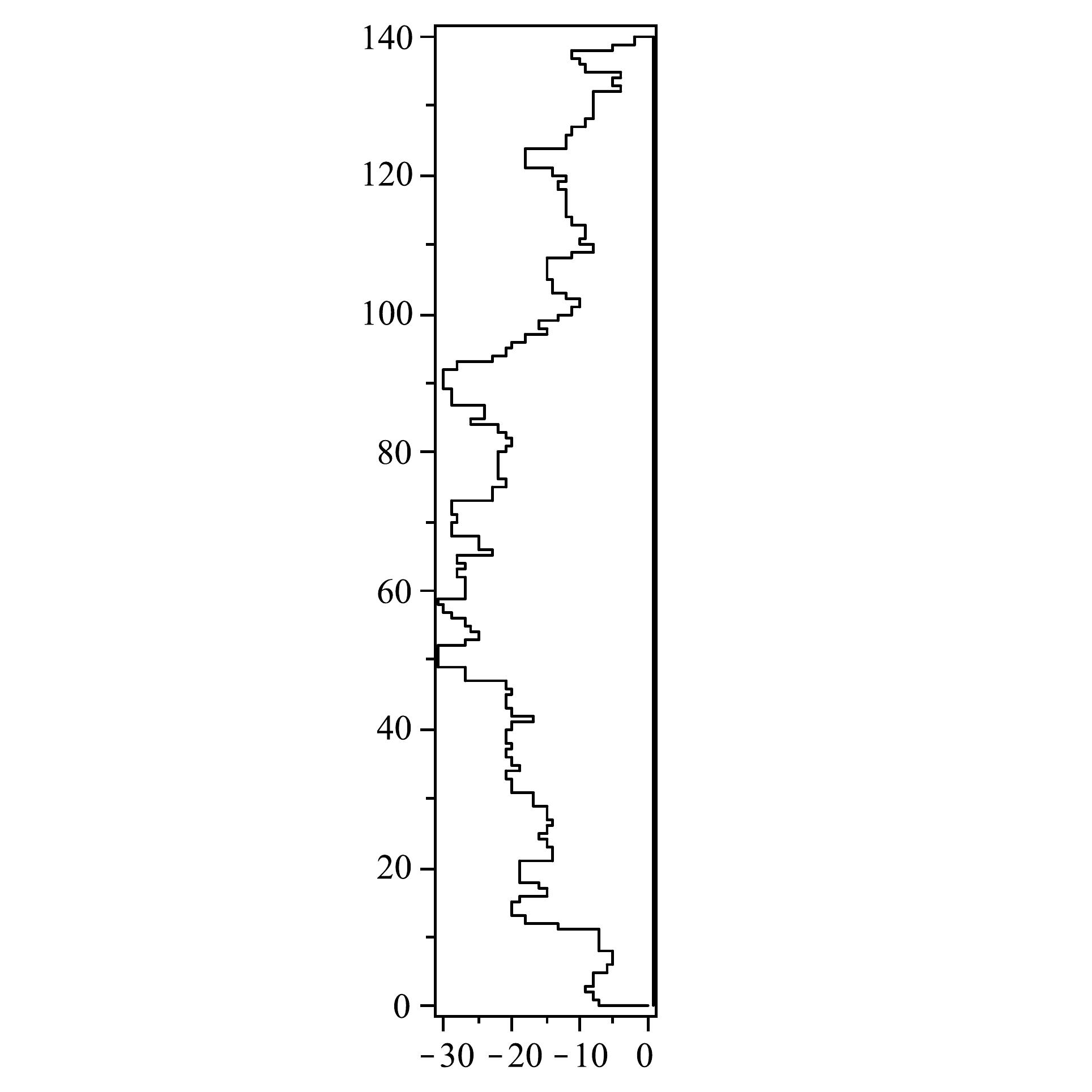}
\caption{\small Random 2-sided PPs of half-perimeter 250}\label{random2sd}
\end{center}
\end{figure}
\begin{figure}[!h]
\begin{center}
\includegraphics[height=43mm,width=63mm]{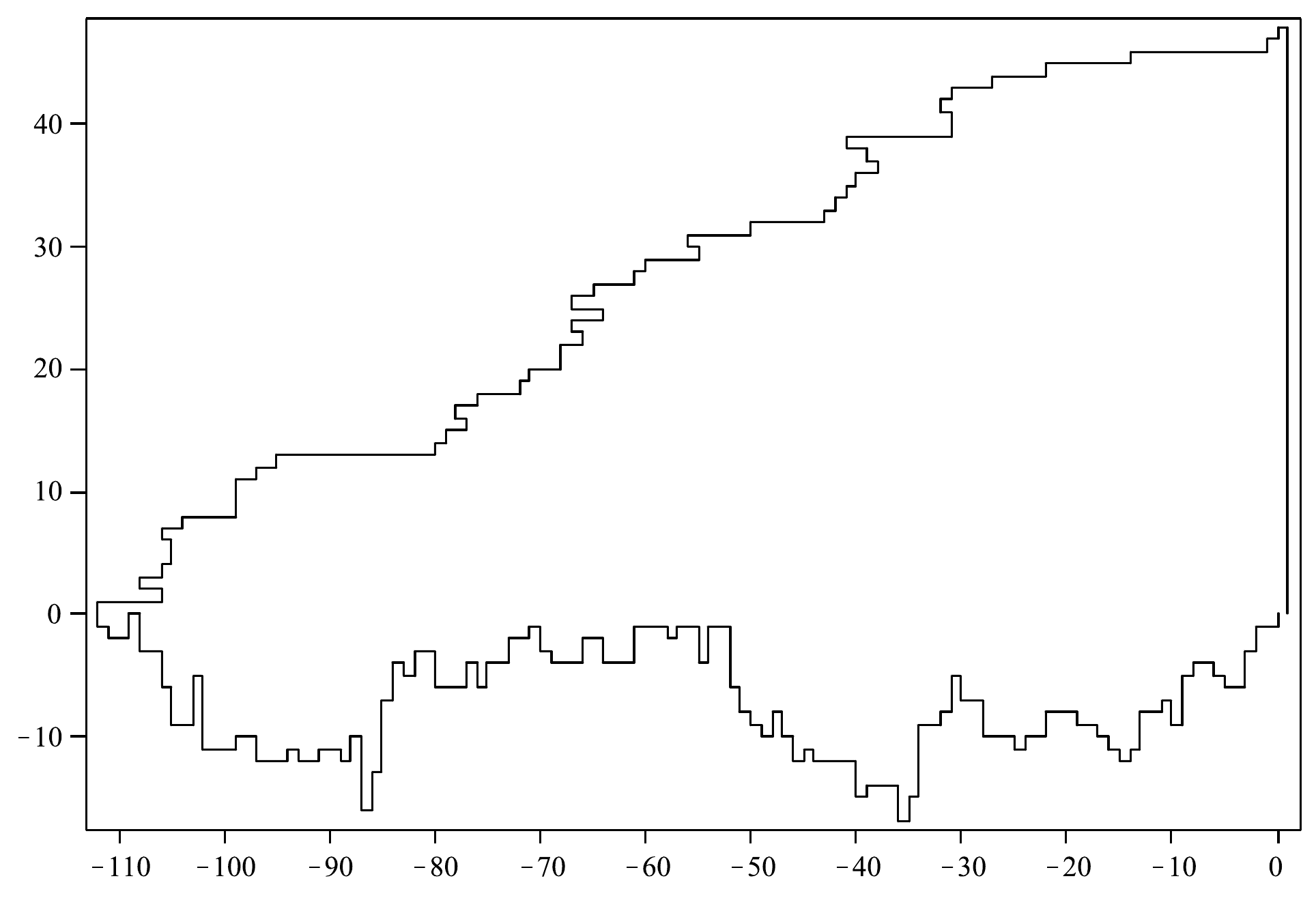}
\includegraphics[height=43mm,width=69mm]{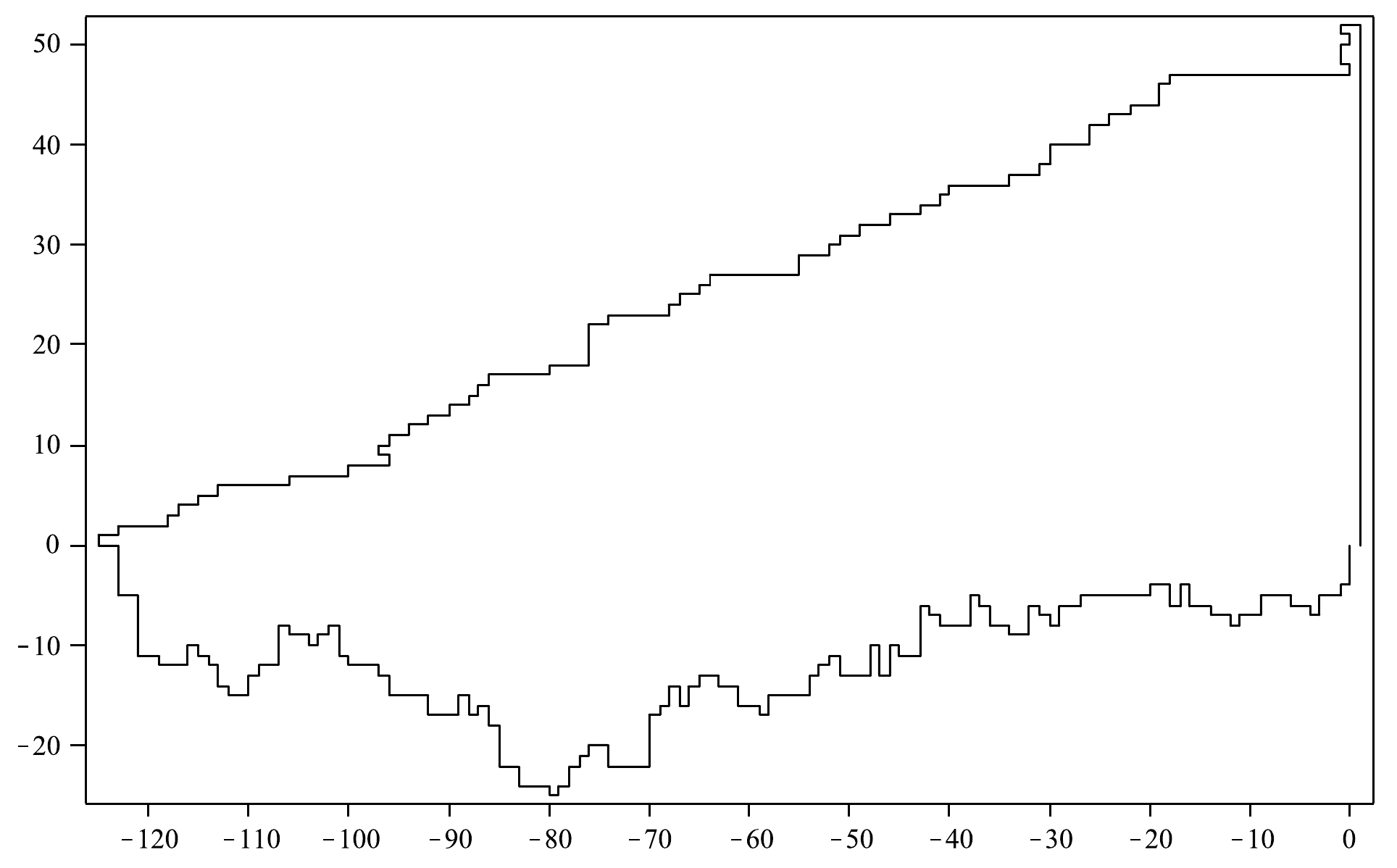}
\includegraphics[height=43mm,width=69mm]{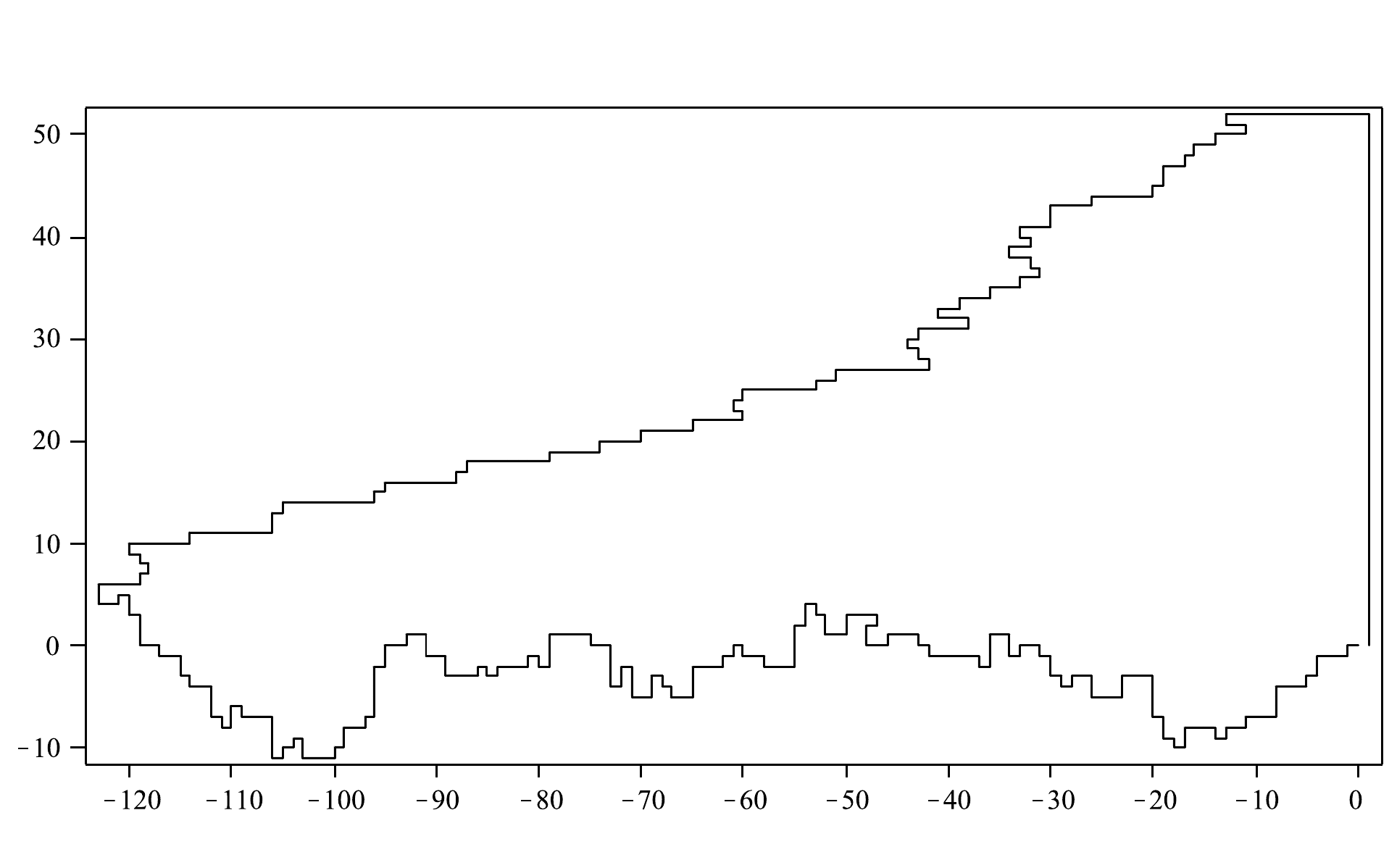}
\includegraphics[height=43mm,width=69mm]{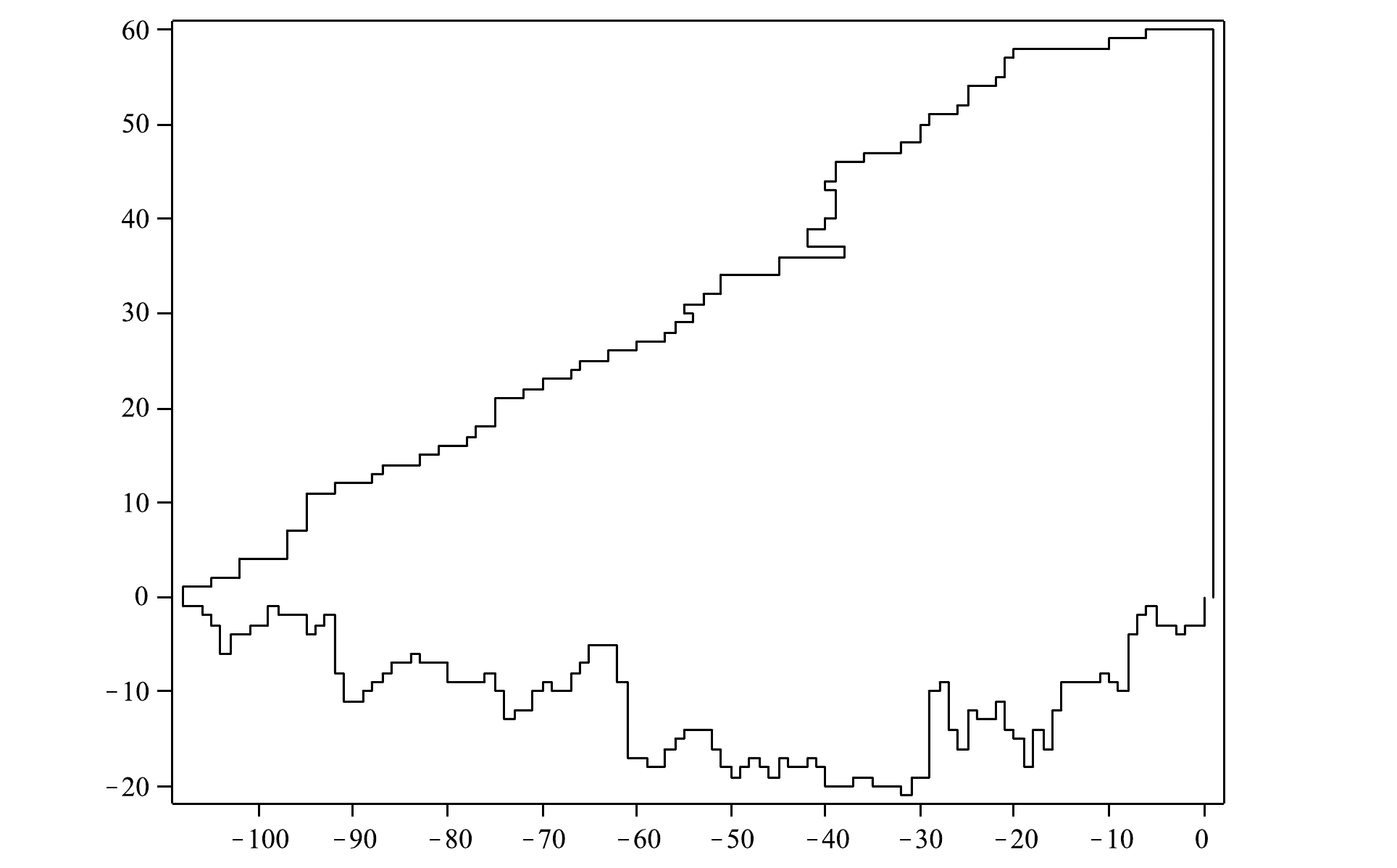}
\caption{\small Random 3-sided PPs of half-perimeter 250}\label{random3sd}
\end{center}
\end{figure}
\begin{figure}[!h]
\begin{center}
\includegraphics[height=63mm,width=63mm]{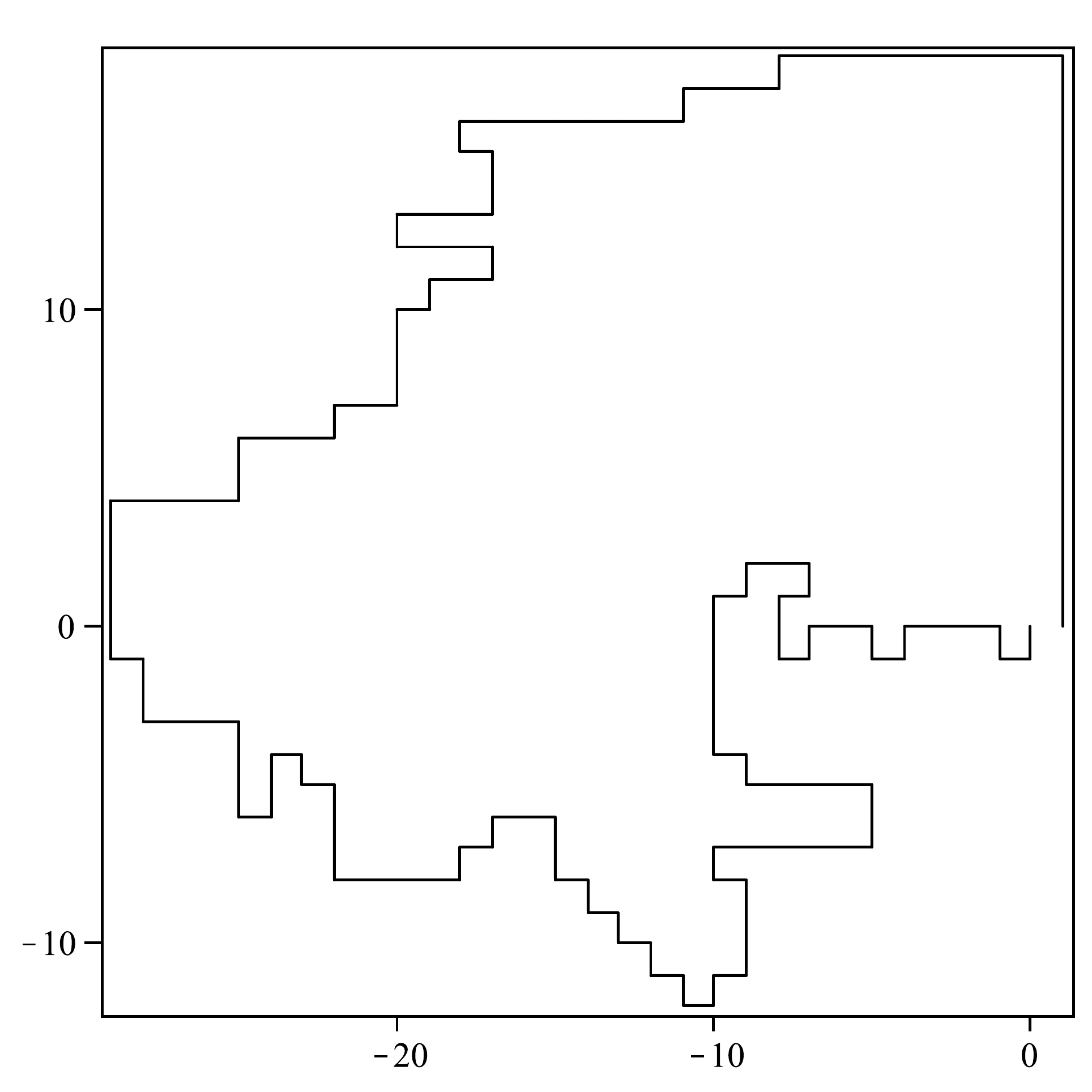}
\includegraphics[height=63mm,width=69mm]{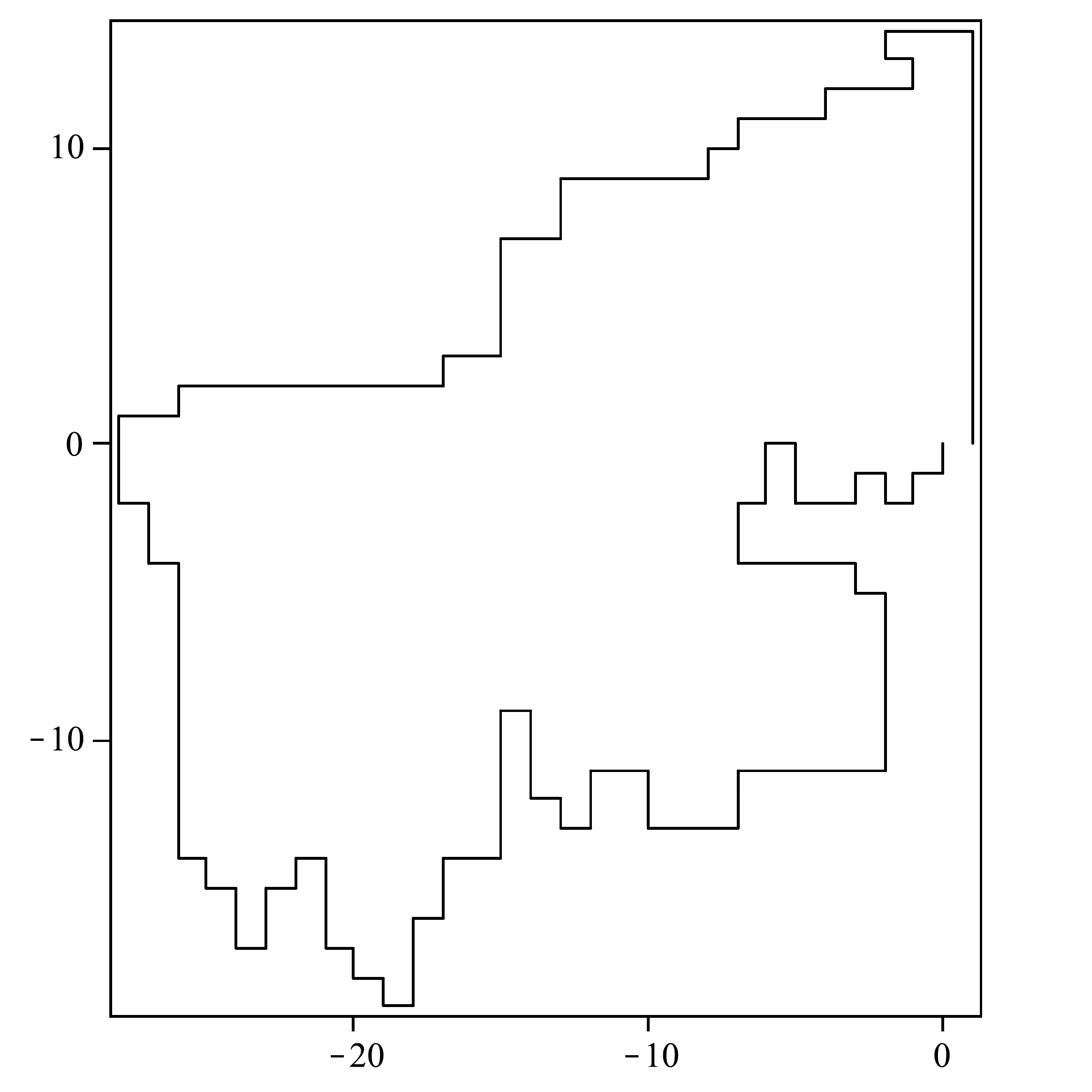}
\includegraphics[height=63mm,width=69mm]{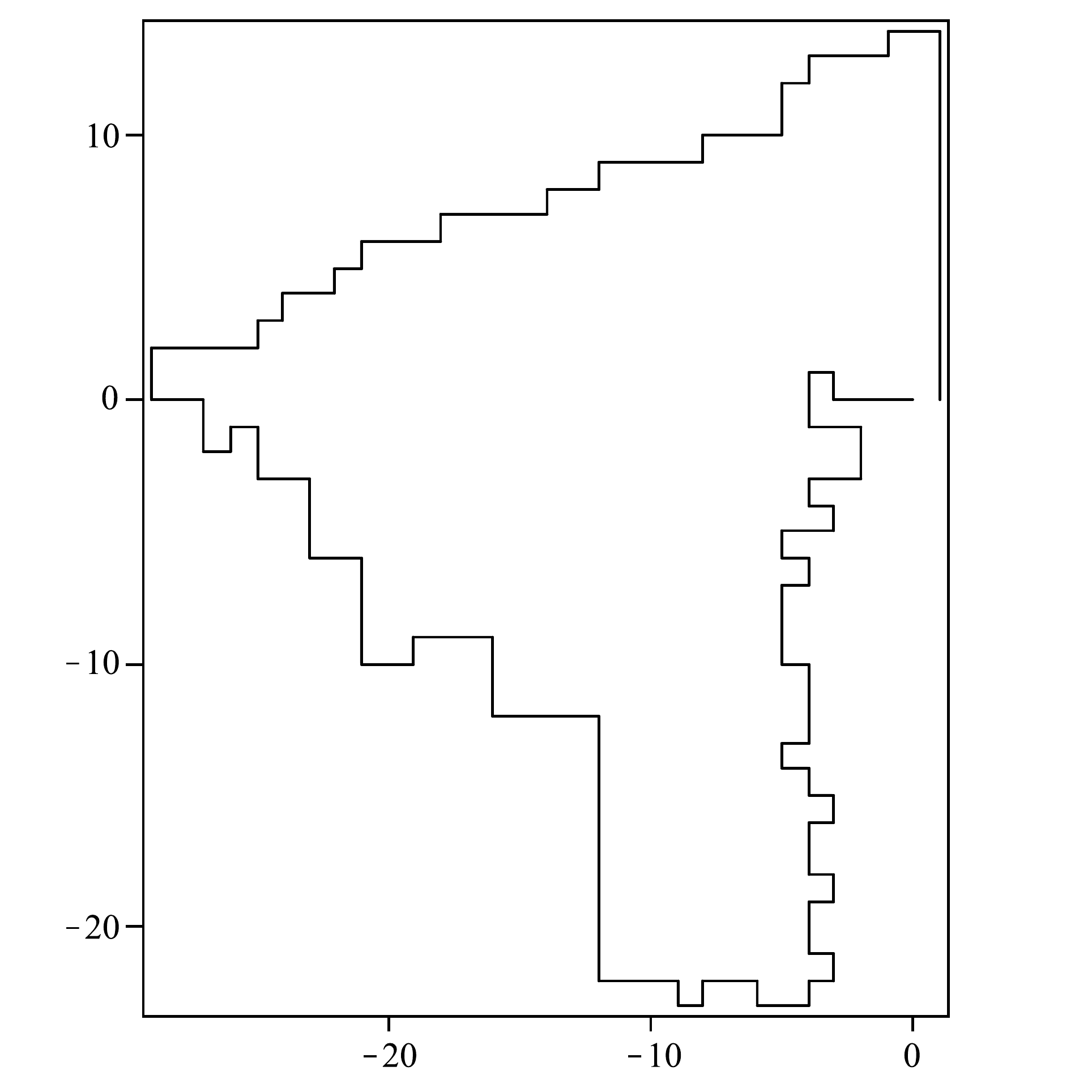}
\includegraphics[height=63mm,width=69mm]{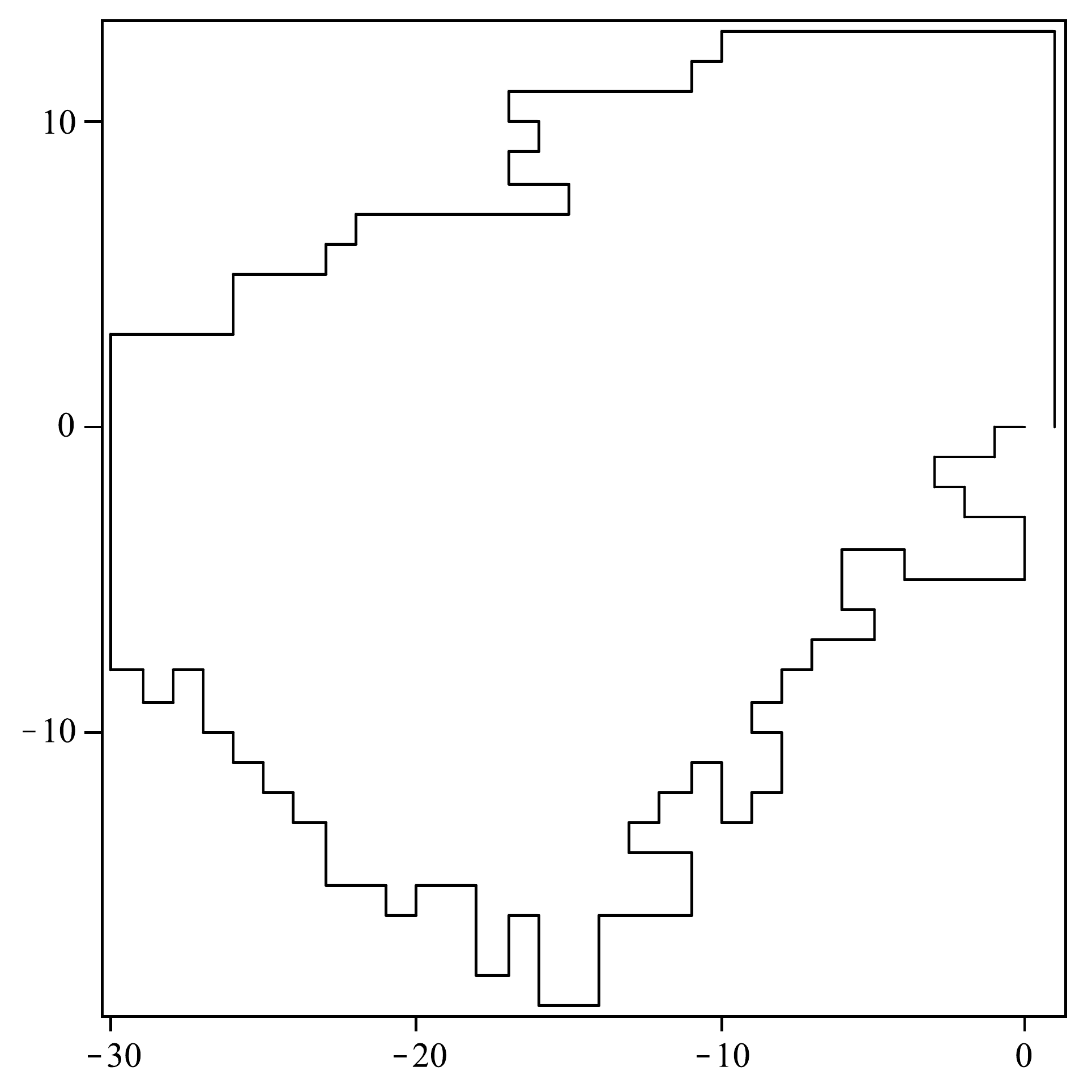}
\caption{\small Random unrestricted PPs of half-perimeter 80}\label{random4sd}
\end{center}
\end{figure}
\end{document}